\newtheorem{thm}{Theorem}[section]
\newtheorem{cor}[thm]{Corollary}
\newtheorem{lem}[thm]{Lemma}
\newtheorem{ex}[thm]{Example}
\newtheorem{definition}[thm]{Definition}
\newtheorem{rmk}[thm]{Remark}
\newtheorem{prop}[thm]{Proposition}
\newtheorem{conj}[thm]{Conjecture}
\begin{document}

\title{Reduced Open Gromov-Witten Invariants on HyperK\"ahler Manifolds}
\author{Yu-Shen Lin}

\maketitle
\section{Introduction}

   Inspired by closed string theory and pioneer work of Gromov \cite{G}, Gromov-Witten theory now becomes an important technique to study symplectic manifolds. For instance, Floer conquered the celebrated Arnold conjecture using the technology of pseudo-holomorphic curves. Naively, Gromov-Witten invariants count the number of (pseudo-)holomorphic curves in a symplectic manifold with prescribed incident conditions. However, for hyperK\"ahler manifolds, the standard tangent-obstruction theory of the moduli space of curve has a trivial quotient. Therefore, the genus zero Gromov-Witten invariants vanishes for any hyperK\"ahler manifolds. The vanishing of the invariants reflects the fact that hyperK\"ahler manifolds usually admit deformation with no holomorphic curves and Gromov-Witten invariants are deformation invariants. To retrieve non-trivial invariants, people developed the reduced Gromov-Witten invariants by removing the trivial factor \cite{L}\cite{L5}\cite{MP}. 
    
   In symplectic geometry, holomorphic discs are broadly used to understand the geometry of Lagrangians. In the context of mirror symmetry, holomorphic discs on a Calabi-Yau manifold play important roles of quantum correction of complex structure of the mirror Calabi-Yau. However, it is hard to defined open Gromov-Witten type invariants in general due to the existence of real codimension one boundaries of the moduli spaces of pseudo-holomorphic discs, which makes the integration or intersection theory on those moduli spaces ambiguous. The typical way to overcome this defect is to add some decoration to the relevant moduli spaces such as anti-symplectic involution \cite{S2} or $S^1$-action \cite{L2}. 
    
   Here in this paper, we are going to define a version of reduced open Gromov-Witten invariants on hyperK\"ahler manifolds which can be viewed as an hybrid. We start with a hyperK\"ahler manifold $X$ with holomorphic Lagrangian fibration. There exists a natural $S^1$-family of complex structures in the twistor line making holomorphic Lagrangian fibration become a special Lagrangian fibration. In this setting, changing the boundary condition is similar to changing stability conditions. The "invariant" we defined here will only be constant locally but might jump when the torus fibre moves through a real codimension one subset on the base, which we will call them the wall of marginal stability. The jump of invariants satisfy the Kontsevich-Soibelman wall-crossing formula \cite{KS1} in the examples we studied.

Despite the interest in symplectic geometry, the reduced open
Gromov-Witten invariant is also related to twistorial construction
of hyperK\"ahler metric constructed in \cite{GMN} and mirror symmetry. 

This paper is organized as follows. In section 2, we review the definition of hyperK\"ahler manifolds and the hyperK\"ahler rotation trick. In section 3, we define the reduced Gromov-Witten invariants for hyperK\"ahler manifolds. We will focus on the example when $X$ is an elliptic K3 surfaces and see the wall-crossing phenomenon in section 4. As applications of this new open Gromov-Witten invariants, we will discuss the tropical geometry on K3 surfaces in section 5. We will talk about the relation between the twistorial construction of hyperK\"ahler metrics \cite{GMN} in section 6.

\section*{Acknowledgements} The author would like to thank his thesis advisor Shing-Tung Yau for constant support and encouragement. This paper is include part of the author's thesis. The author would also like to thank the organizers of ICCM 2013 and National Taiwan University for invitation and hospitality. 

\section{HyperK\"ahler Manifolds and HyperK\"ahler Rotation Trick}
\begin{definition}
 A complex manifold $X$ of dimension $2n$ is called a hyperK\"ahler manifold if its holonomy group falls in $Sp(n)$.
\end{definition}

\begin{ex}
  Any compact complex K\"ahler manifold admits a holomorphic symplectic $2$-form is hyperK\"ahler \cite{Y1}. In particular, K3 surfaces are hyperK\"ahler.
\end{ex}

\begin{ex} There are also non-compact examples of hyperK\"ahler manifolds such as cotangent bundles of K\"ahler manifolds, Hithcin moduli spaces and the Ooguri-Vafa space (see section \ref{801}).
\end{ex}

Let $X$ be a hyperK\"ahler manifold and $g$ be the corresponding
hyperK\"ahler metric from the definition. Then there exists integrable complex structures $J_1,J_2,J_3$ satisfying quaternion relation. 
  \begin{align*}
    \omega(\cdot,\cdot)=g(J_3\cdot,\cdot) 
   \end{align*} is a K\"ahler form and 
  \begin{align*}
   \Omega(\cdot,\cdot)=g(J_1\cdot,\cdot)+ig(J_2\cdot,\cdot)
  \end{align*} is a holomorphic $2$-form with respect to the complex structure $J_3$. Moreover, $\underline{X}$ admits a family of
complex structures parametrized by $\mathbb{P}^1$, called twistor line. Explicitly, they are given
by
\begin{equation*}
   J_{\zeta}=\frac{i(-\zeta+\bar{\zeta})J_1-(\zeta+\bar{\zeta})J_2+(1-|\zeta|^2)J_3}{1+|\zeta|^2},
   \hspace{3mm}
   \zeta\in \mathbb{P}^1.
\end{equation*}
 Moreover, the holomorphic
symplectic $2$-forms $\Omega_{\zeta}$ with respect to the compatible
complex structure $J_{\zeta}$ are given by
\begin{equation}  \label{38}
  \Omega_{\zeta}=-\frac{i}{2\zeta}\Omega+\omega-\frac{i}{2}\zeta\bar{\Omega}.
  \end{equation} In particular, straightforward computation from (\ref{38}) gives
\begin{prop} \label{49}
Assume $\zeta=e^{i\vartheta}$, then we have
  \begin{align*}
     \omega_{\vartheta}:=\omega_{\zeta}&=-\mbox{Im}(e^{-i\vartheta}\Omega),\\
     \Omega_{\vartheta}:=\Omega_\zeta&=\omega-iRe(e^{-i\vartheta}\Omega).
  \end{align*}
\end{prop}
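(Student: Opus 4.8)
The plan is to establish the two identities separately, each by a direct substitution of $\zeta = e^{i\vartheta}$ into formulas already at hand. Throughout I would write $\omega_j(\cdot,\cdot)=g(J_j\cdot,\cdot)$ for $j=1,2,3$, so that $\omega=\omega_3$ and $\Omega=\omega_1+i\omega_2$ (hence $\bar\Omega=\omega_1-i\omega_2$) by the definitions recalled above.

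For the expression of $\Omega_\vartheta$, I would plug $\zeta=e^{i\vartheta}$ directly into $(\ref{38})$. Then $1/\zeta=e^{-i\vartheta}=\bar\zeta$, so that
\[
\Omega_\vartheta = -\tfrac{i}{2}e^{-i\vartheta}\Omega + \omega - \tfrac{i}{2}e^{i\vartheta}\bar\Omega .
\]
Since $e^{i\vartheta}\bar\Omega=\overline{e^{-i\vartheta}\Omega}$, the first and last terms combine into $-\tfrac{i}{2}\bigl(e^{-i\vartheta}\Omega+\overline{e^{-i\vartheta}\Omega}\bigr)=-i\,\mathrm{Re}(e^{-i\vartheta}\Omega)$, and the claimed identity $\Omega_\vartheta=\omega-i\,\mathrm{Re}(e^{-i\vartheta}\Omega)$ drops out immediately.

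For the expression of $\omega_\vartheta$, note that $\omega_\zeta$ is by definition the K\"ahler form $g(J_\zeta\cdot,\cdot)$ attached to the complex structure $J_\zeta$, so I would substitute $\zeta=e^{i\vartheta}$ into the explicit formula for $J_\zeta$ displayed above. Because $|\zeta|^2=1$ the $J_3$-term drops out, $1+|\zeta|^2=2$, $-\zeta+\bar\zeta=-2i\sin\vartheta$ and $\zeta+\bar\zeta=2\cos\vartheta$, giving $J_\vartheta=\sin\vartheta\,J_1-\cos\vartheta\,J_2$ and hence $\omega_\vartheta=\sin\vartheta\,\omega_1-\cos\vartheta\,\omega_2$. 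On the other side, expanding $e^{-i\vartheta}\Omega=(\cos\vartheta-i\sin\vartheta)(\omega_1+i\omega_2)$ and taking minus its imaginary part yields exactly $\sin\vartheta\,\omega_1-\cos\vartheta\,\omega_2$, which matches.

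I do not expect any genuine obstacle here; the proposition is pure bookkeeping. The only points requiring some care are fixing the conventions consistently (that $\omega_\zeta=g(J_\zeta\cdot,\cdot)$, and that $\Omega_\zeta$ is normalized as in $(\ref{38})$ rather than rescaled) and tracking signs when splitting $\Omega$ into real and imaginary parts relative to $J_3$ versus relative to $J_\vartheta$. As a sanity check one can verify that the two forms just computed assemble into a hyperK\"ahler triple adapted to $J_\vartheta$: $\mathrm{Re}\,\Omega_\vartheta=\omega_3$ while $-\mathrm{Im}\,\Omega_\vartheta=\cos\vartheta\,\omega_1+\sin\vartheta\,\omega_2$ is pointwise orthogonal, inside the plane spanned by $\omega_1,\omega_2$, to $\omega_\vartheta=\sin\vartheta\,\omega_1-\cos\vartheta\,\omega_2$, as the quaternion relations demand.
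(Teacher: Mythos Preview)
Your computation is correct and is precisely the ``straightforward computation from (\ref{38})'' that the paper invokes without writing out; the paper gives no further proof beyond that phrase. Your added sanity check about the orthogonality of the resulting triple is a nice bonus but not needed for the statement.
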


\begin{rmk} \label{300}
   Let $L$ be a holomorphic Lagrangian in $(\underline{X},\omega,\Omega)$, namely,
   $\Omega|_L=0$. Assume that the
north and south pole of the twistor line are given by
$(\omega,\Omega)$ and $(-\omega,\bar{\Omega})$ respectively, making
$L$ a holomorphic Lagrangian. The hyperK\"ahler structures
corresponding to the equator $\{\zeta=e^{i\vartheta}:|\zeta|=1\}$
make $L$ a special Lagrangian in
$X_{\vartheta}=(\underline{X},\omega_{\vartheta},\Omega_{\vartheta})$,i.e. $\omega_{\vartheta}|_{L}=\mbox{Im}\Omega_{\vartheta}|_{L}=0$ by Proposition \ref{49}.
In particular, if $(\underline{X},\omega,\Omega)$ admits a holomorphic
Lagrangian fibration, then it induces a special
Lagrangian fibrations on $X_{\vartheta}$ for each $\vartheta\in S^1$.
This is the so-called hyperK\"ahler rotation trick.
\end{rmk}

\section{Reduced Open Gromov-Witten Invariants on HyperK\"ahler
Manifolds}

For simplicity, we will assume the hyperK\"ahler manifold $(X,\omega,\Omega)$ (not necessarily compact) admits an abelian fibration structure $X\rightarrow B$ such that the generic fibres are complex torus.  
Let $\Delta\subseteq B$ be the discriminant locus (also referred as singularity of the affine structures later on) of the fibration and $B_0=B\backslash \Delta$. We will denote the fibre over $u\in B$ by $L_u$.

Consider the following exact sequence of local systems 
\begin{align}\label{731}
     \bigcup_{u\in B_0}H_2(X) \rightarrow \Gamma=\bigcup_{u\in B_0} H_2(X, L_u) \rightarrow \Gamma_g= \bigcup_{u\in B_0} H_1(L_u) \rightarrow 0,
\end{align}

 The topological pairing on $\Gamma_g$ is a non-degenerate symplectic pairing which lift to a degenerate skew-symmetric pairing on $\Gamma$ with kernel $\bigcup_{u\in B_0}H_2(X)$. We define the central charge $Z$ to be the period 
\begin{align*}
 Z:\Gamma &\hspace{2mm} \longrightarrow \hspace{5mm}\mathbb{C}\\
\gamma_u &\longmapsto Z_{\gamma_u}(u)= \int_{\gamma_u}\Omega, 
\end{align*}
for each $\gamma_u\in H_2(X,L_u)$. The integral is well-defined because
$\Omega|_L=0$. We will call $|Z_{\gamma}|$ the energy of the relative class $\gamma$. The following lemma is straight forward computation:

\begin{lem} \label{34}
    For any $v \in TB_0$, we have
   \begin{equation}\label{735}
   dZ_{\gamma}(v)=\int_{\partial \gamma}\iota_{\tilde{v}}\Omega,
   \end{equation}
   where $\tilde{v} \in TX$ is any lifting of $v$.
\end{lem}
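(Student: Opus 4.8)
The plan is to differentiate the period integral $Z_\gamma(u) = \int_{\gamma_u}\Omega$ along a tangent vector $v \in T_uB_0$ by interpreting the family of relative cycles $\gamma_u$ as the trace of a single cycle moving with $u$, and then applying Cartan's magic formula. First I would set up the local picture: choose a path $u(t)$ in $B_0$ with $u(0)=u$ and $\dot u(0)=v$, and lift it to an ambient isotopy $\phi_t\colon X\to X$ covering the flow, i.e. a family of diffeomorphisms with $\phi_0=\mathrm{id}$ whose generating vector field $\tilde v$ is a lift of $v$ (such a lift exists since $X\to B$ is a submersion over $B_0$). Since the fibration is locally trivial over $B_0$, $\phi_t$ can be taken to carry $L_u$ to $L_{u(t)}$, and the local system structure on $\Gamma$ lets us transport a fixed relative chain representative $\gamma$ of $\gamma_u$ to a representative $\gamma_{u(t)} = (\phi_t)_*\gamma$ of $\gamma_{u(t)}$ (this is precisely the flat/parallel transport in the local system).

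The main computation is then
\begin{align*}
  dZ_\gamma(v) = \left.\frac{d}{dt}\right|_{t=0}\int_{(\phi_t)_*\gamma}\Omega
  = \left.\frac{d}{dt}\right|_{t=0}\int_{\gamma}\phi_t^*\Omega
  = \int_\gamma \mathcal{L}_{\tilde v}\,\Omega
  = \int_\gamma \bigl(d\,\iota_{\tilde v}\Omega + \iota_{\tilde v}\,d\Omega\bigr).
\end{align*}
Now $d\Omega = 0$ because $\Omega$ is a holomorphic symplectic form (closed), so the second term drops out, and by Stokes' theorem $\int_\gamma d(\iota_{\tilde v}\Omega) = \int_{\partial\gamma}\iota_{\tilde v}\Omega$, which is the claimed formula \eqref{735}. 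It remains to check that the answer is independent of the choice of lift $\tilde v$: if $\tilde v'$ is another lift, then $\tilde v - \tilde v'$ is tangent to the fibres, so along $\partial\gamma \subseteq L_u$ it is tangent to $L_u$; since $\Omega|_{L_u} = 0$ (the fibre is holomorphic Lagrangian), $\iota_{\tilde v - \tilde v'}\Omega$ restricts to $0$ on $L_u$ and hence $\int_{\partial\gamma}\iota_{\tilde v - \tilde v'}\Omega = 0$. This also re-confirms that $\partial\gamma$ lies in a Lagrangian, consistent with the well-definedness of $Z_\gamma$ itself noted just before the lemma.

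The only genuinely delicate point — and the step I would be most careful about — is the justification for exchanging $\frac{d}{dt}$ with $\int_\gamma$ and the construction of the isotopy $\phi_t$ when $\gamma$ is merely a relative chain (with boundary on the moving Lagrangian) rather than a cycle in a fixed space; one wants $\partial\bigl((\phi_t)_*\gamma\bigr)\subseteq L_{u(t)}$ to hold for all small $t$, which is exactly what the local triviality of the fibration over $B_0$ guarantees. In the non-compact setting one should also note that everything is local on $B_0$, so no issue of convergence of the period arises beyond what is already implicit in the definition of $Z$. Once the isotopy is in place the rest is the routine Cartan-formula-plus-Stokes manipulation above, so I would keep that part brief in the write-up.
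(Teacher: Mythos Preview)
Your argument is correct and is essentially the same computation the paper performs: both differentiate the period by taking a Lie derivative along a lift $\tilde v$, use $d\Omega=0$, and pick up the boundary term $\int_{\partial\gamma}\iota_{\tilde v}\Omega$, with the independence of the lift following from $\Omega|_L=0$. The only difference is packaging: the paper phrases the step you do via isotopy + Cartan + Stokes as a one-line ``variational formula of relative pairing'' after viewing $\Omega$ as the relative class $(\Omega,0)\in H^2(X,L)$, which yields $\mathcal{L}_v\langle\gamma,(\Omega,0)\rangle=\langle\gamma,(\iota_{\tilde v}d\Omega,\,\iota_{\tilde v}(0-\Omega))\rangle$ and hence the same boundary integral.
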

\begin{proof}It is straightforward to check that the right hand side of (\ref{735}) is independent of choices of lifting $\tilde{v}$. Since $\Omega|_L=0$, we view $\Omega$ as the
   element $(\Omega,0)\in H^2(X,L)$. From the variational formula of
   relative pairing,
   \begin{align*}
      dZ_{\gamma}(v)&=\mathcal{L}_v \langle \gamma,(\Omega,0)\rangle \\
                    &=\langle \gamma, (\iota_{\tilde{v}}d\Omega,
                    \iota_{\tilde{v}}(0-\Omega))\rangle=\int_{\partial
                    \gamma}\iota_{\tilde{v}}\Omega.
   \end{align*}
\end{proof}

Given a point $u_0\in B_0$ and an element $\gamma_{u_0}\footnote{we might drop the subindex $u_0$ when there is no ambiguity later on}\in
\Gamma_{u_0}$, there exists a neighborhood $\mathcal{U}$ of $u_0$ in $B_0$
and a neighborhood of $\tilde{\mathcal{U}}$ of $\gamma_{u_0}$ in $\Gamma$.  such
that $\mathcal{U}$ is homeomorphic to $\tilde{\mathcal{U}}$ via the projection. Under this identification, we can define a complex structure on $\Gamma$ and have the following important observation.
\begin{cor}
  The central charge $Z:\Gamma\rightarrow \mathbb{C}$ is a
  holomorphic function.
\end{cor}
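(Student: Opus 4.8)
The plan is to reduce the statement to a pointwise Cauchy--Riemann check, using Lemma \ref{34} as the only real input. Since $\Gamma$ is a local system over $B_0$ and the complex structure on $\Gamma$ referred to in the corollary is, by construction, the one transported from $B_0\subseteq B$ through the local homeomorphism between $\tilde{\mathcal{U}}$ and $\mathcal{U}$, to say that $Z$ is holomorphic on $\Gamma$ is the same as saying that for every $u_0\in B_0$ and every flat local section $\gamma$ of $\Gamma$ over a neighborhood $\mathcal{U}$ of $u_0$, the function $u\mapsto Z_\gamma(u)=\int_{\gamma_u}\Omega$ is holomorphic on $\mathcal{U}$ with respect to the complex structure $J_B$ of the base $B$. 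By Lemma \ref{34} this function is $C^1$ (its differential is represented by the integral on the right side of (\ref{735}), which depends continuously on $u$), so the task is to verify $dZ_\gamma\circ J_B = i\,dZ_\gamma$ as an identity of real $1$-forms on $\mathcal{U}$.

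First I would use that the abelian fibration $\pi\colon X\to B$ is holomorphic for the complex structure $J_3$ on $X$ making $\Omega$ a holomorphic $(2,0)$-form, so that $d\pi\circ J_3 = J_B\circ d\pi$. Hence, for $v\in T_uB_0$ and any lift $\tilde v\in TX$ of $v$, the vector $J_3\tilde v$ is a lift of $J_B v$. Because Lemma \ref{34} allows an arbitrary choice of lift, I may compute $dZ_\gamma(J_B v) = \int_{\partial\gamma}\iota_{J_3\tilde v}\Omega$. The second ingredient is the linear-algebra fact that, since $\Omega$ is of type $(2,0)$ with respect to $J_3$, one has $\iota_{J_3 w}\Omega = i\,\iota_w\Omega$ for every real vector field $w$; this is immediate after writing $\Omega$ in local $J_3$-holomorphic coordinates, using that contraction with the $(0,1)$-part of $w$ annihilates $\Omega$. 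Applying this with $w=\tilde v$ and integrating over the cycle $\partial\gamma\subseteq L_u$ gives $dZ_\gamma(J_B v) = i\int_{\partial\gamma}\iota_{\tilde v}\Omega = i\,dZ_\gamma(v)$, which is exactly the Cauchy--Riemann equation. Together with the $C^1$ regularity noted above, this shows each $Z_\gamma$ is holomorphic, hence $Z$ is holomorphic on $\Gamma$.

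The computation itself is short, so I do not expect a genuine obstacle; the points that need care are purely bookkeeping. One is to confirm that the complex structure on $\Gamma$ is really the pullback of the one on $B_0$, so that "holomorphic on $\Gamma$" unwinds to "$Z_\gamma$ holomorphic on each $\mathcal{U}$." The other is to make sure the freedom of lift in Lemma \ref{34} is being used legitimately when a lift of $J_B v$ is replaced by $J_3$ applied to a lift of $v$; this is where holomorphicity of $\pi$ enters and is the hinge of the argument. If one preferred not to quote the regularity part of Lemma \ref{34}, one could instead trivialize $\Gamma$ locally and differentiate $\int_{\gamma_u}\Omega$ under the integral sign directly, but invoking the lemma is cleaner and keeps the proof to a few lines.
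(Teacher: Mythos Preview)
Your proof is correct and follows essentially the same route as the paper's: both use Lemma~\ref{34} together with the fact that $\Omega$ is $(2,0)$ for $J_3$ to verify the Cauchy--Riemann equation, the paper phrasing this as $(v+iJv)Z=\int_{\partial\gamma}\iota_{\tilde v+iJ\tilde v}\Omega=0$ while you unpack the same identity as $dZ_\gamma(J_Bv)=\int_{\partial\gamma}\iota_{J_3\tilde v}\Omega=i\int_{\partial\gamma}\iota_{\tilde v}\Omega=i\,dZ_\gamma(v)$. The only item in the paper's proof you do not reproduce is a side remark that when $\gamma$ is a Lefschetz thimble near a singular fibre, $Z_\gamma$ stays bounded and hence extends holomorphically across the singularity; this is an extension statement beyond $B_0$ and is not needed for the corollary as written.
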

\begin{proof}
  Since any $(0,1)$-vector on $TB_0$ can be expressed in term of
  $v+iJv$ for some $v\in TB_0$, where $J$ is the almost complex structure. We have 
     \begin{align*}
        (v+iJv)Z=\int_{\partial
        \gamma}\iota_{(\tilde{v}+iJ\tilde{v})}\Omega=0.
     \end{align*}The latter equality holds because $\Omega$ is a
     $(2,0)$-form and $\tilde{v}+iJ\tilde{v}$ is always a $(0,1)$-vector.
     Notice that for $y$ near a singularity of the affine structure,
     $\gamma$ represents the relative class of Lefschetz thimble,
     then $Z_{\gamma}$ is bounded in a neighborhood of the
     singularity and thus is a removable singularity.
\end{proof}

\begin{cor} \label{350}
   Let $\gamma\in \Gamma$, then $dZ_{\gamma}\neq 0$ whenever $Z_{\gamma}$ is defined. 
\end{cor}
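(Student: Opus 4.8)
The plan is to feed Lemma~\ref{34} into the deformation theory of the holomorphic Lagrangian fibres. Fix a point $u\in B_0$ at which $Z_\gamma$ is defined. By Lemma~\ref{34}, for every $v\in T_uB_0$ we have $dZ_\gamma(v)=\int_{\partial\gamma}\iota_{\tilde v}\Omega$, where $\iota_{\tilde v}\Omega|_{L_u}$ is a closed $1$-form on $L_u$ depending only on $v$. The first step is to identify this $1$-form intrinsically. Since $L_u$ is holomorphic Lagrangian ($\Omega|_{L_u}=0$), contraction with $\Omega$, $\nu\mapsto(\iota_\nu\Omega)|_{L_u}$, gives an isomorphism of holomorphic bundles $N_{L_u/X}\xrightarrow{\ \sim\ }\Omega^1_{L_u}$; taking the lift $\tilde v$ to be of type $(1,0)$ shows in addition that $(\iota_{\tilde v}\Omega)|_{L_u}$ is of type $(1,0)$, and being closed it is a holomorphic $1$-form on $L_u$. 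On the other hand the Kodaira--Spencer map of the fibration embeds $T_uB_0$ into $H^0(L_u,N_{L_u/X})$, a nonzero tangent direction producing a nonvanishing normal section. Composing, $v\mapsto(\iota_{\tilde v}\Omega)|_{L_u}$ realizes $T_uB_0$ inside $H^0(L_u,\Omega^1_{L_u})$; as $L_u$ is a complex torus both spaces have dimension $\dim_{\mathbb C}B_0$, so this is an isomorphism $\theta\colon T_uB_0\xrightarrow{\ \sim\ }H^0(L_u,\Omega^1_{L_u})$.

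Granting this, $dZ_\gamma(v)=\langle\partial\gamma,\theta(v)\rangle$, where $\langle\,,\rangle\colon H_1(L_u;\mathbb Z)\times H^0(L_u,\Omega^1_{L_u})\to\mathbb C$ is the period pairing. Thus $dZ_\gamma$ vanishes at $u$ exactly when $\partial\gamma$ pairs trivially with every holomorphic $1$-form on $L_u$. Writing $L_u=V/\Lambda$ with $V$ a complex vector space and $H_1(L_u;\mathbb Z)=\Lambda$, the holomorphic $1$-forms are the constant $\mathbb C$-linear functionals on $V$, and these separate the points of $\Lambda\otimes_{\mathbb Z}\mathbb C=V$; hence $dZ_\gamma|_u=0$ forces $\partial\gamma=0$ in $H_1(L_u;\mathbb C)$, so $\partial\gamma$ is torsion and therefore $0$. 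Consequently $dZ_\gamma\neq0$ wherever it is defined, provided $\partial\gamma\neq0$. (For the classes in the image of $\bigcup_{u}H_2(X)$, which are precisely those with $\partial\gamma=0$, $Z_\gamma$ is instead a topological constant; the statement concerns the remaining classes, the ones relevant to counting holomorphic discs.)

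The step I expect to be the genuine obstacle is the intrinsic identification in the first paragraph, specifically verifying that $(\iota_{\tilde v}\Omega)|_{L_u}$ is truly \emph{holomorphic} rather than merely closed, and that the composite $T_uB_0\to H^0(L_u,\Omega^1_{L_u})$ is surjective. Holomorphicity uses that $\Omega$ is of type $(2,0)$ together with the fact, already invoked in proving that $Z$ is holomorphic, that $B_0$ parametrizes a \emph{holomorphic} family of Lagrangian tori, which is what makes $\mathcal L_{\tilde v}\Omega|_{L_u}=0$ and lets one choose $\tilde v$ of type $(1,0)$. Surjectivity amounts to the fibration being, fibrewise, a complete deformation of $L_u$, equivalently to $\Omega$ inducing on $B_0$ the (integral) affine structure of the paper's setup, under which $\theta$ is the derivative of the local action coordinates; concretely it drops out of the dimension count once one knows $h^{1,0}(L_u)=\dim_{\mathbb C}B_0$. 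After that, the argument is just elementary linear algebra of periods on a complex torus.
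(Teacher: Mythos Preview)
Your argument is correct, and you are right to flag that the statement tacitly assumes $\partial\gamma\neq 0$; the paper uses precisely the contrapositive ($dZ_\gamma=0\Rightarrow\partial\gamma=0$) later on. One small slip: $\Lambda\otimes_{\mathbb Z}\mathbb C$ has complex dimension $2n$, not $n$, so it is not $V$. What you actually need, and what is true, is that the period map $\Lambda\hookrightarrow V\cong H^0(L_u,\Omega^1_{L_u})^*$ is injective because $\Lambda\otimes_{\mathbb Z}\mathbb R=V$ as real vector spaces; that already forces $\partial\gamma=0$ once it pairs trivially with all holomorphic $1$-forms.

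The route, however, is genuinely different from the paper's. The paper works with \emph{real} symplectic geometry: since $L_u$ is Lagrangian for the real symplectic form $\mathrm{Re}(e^{-i\vartheta}\Omega)$, the standard action--angle identification for a Lagrangian torus fibration says that
\[
T_uB_0\longrightarrow H^1(L_u;\mathbb R),\qquad v\longmapsto\bigl[\iota_{\tilde v}\,\mathrm{Re}(e^{-i\vartheta}\Omega)\bigr|_{L_u}\bigr],
\]
is an isomorphism. Pairing with $\partial\gamma\neq 0$ then gives a $v$ with $\mathrm{Re}\bigl(e^{-i\vartheta}dZ_\gamma(v)\bigr)\neq 0$, hence $dZ_\gamma\neq 0$. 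Your argument is the holomorphic analogue: you use that $L_u$ is \emph{holomorphic} Lagrangian to get $N_{L_u/X}\cong\Omega^1_{L_u}$, feed in Kodaira--Spencer to identify $T_uB_0\cong H^0(L_u,\Omega^1_{L_u})$, and then invoke nondegeneracy of periods on a complex torus. Both reach the same conclusion; the paper's version is shorter precisely because it sidesteps the two points you flagged as the ``genuine obstacle'' (holomorphicity of $\iota_{\tilde v}\Omega|_{L_u}$ and surjectivity of the Kodaira--Spencer map), replacing them with the single classical fact that Lagrangian torus fibrations carry action coordinates. Your approach, on the other hand, makes the complex structure on $B_0$ and the role of $H^{1,0}(L_u)$ explicit, which is conceptually pleasant and dovetails with the affine--structure discussion in Section~5.
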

\begin{proof}
  The corollary follows immediately from Lemma \ref{34} and $L$ is always Lagrangian with respect to the symplectic $2$-form $\mbox{Re}(e^{-i\vartheta}\Omega)$. 
\end{proof}

Recall that the hyperK\"ahler triple $(\omega,\Omega)$
will induce a $\mathbb{P}^1$-family of hyperK\"ahler structures. Let $L$ be a holomorphic
Lagrangian fibre with respect to $\Omega$, then by Remark \ref{300} there is an $S^1$-family of hyperK\"ahler structures in the twistor family such that $L$ is a special
Lagrangian. Let $\mathcal{M}_{k,\gamma}(\mathfrak{X},L)$ be the moduli space of stable maps of pseudo-holomorphic discs in the above $S^1$-family in the relative class $\gamma\in H_2(X,L)$ with boundaries on the fixed
special Lagrangian $L$ and with $k$ boundary marked points in
counter-clockwise order. Let $ev_i$ be the evaluation map of the $i$-th marked point. 
The virtual dimension of $\mathcal{M}_{k,\gamma}(\mathfrak{X},L)$ is given by $n+k-2$

 Assume $\gamma_u \in H_2(\underline{X},L)$ can be realized as the image of a holomorphic map in $X_{\vartheta}$, then 
  \begin{align*}
     Z_{\gamma}(u)=e^{i\vartheta}\int_{\gamma}e^{-i\vartheta}\Omega=e^{i\vartheta}i \int_{\gamma}\mbox{Im}(e^{-i\vartheta}\Omega).
  \end{align*} The last identity is because $\int_{\gamma}\Omega_{\vartheta}=0$.
In particular, we have $\mbox{Arg}Z_{\gamma}(u)=\vartheta+\pi/2$. In other words, the phase of central charge $\mbox{Arg}Z_{\gamma}$ a priori determines the obstruction for the ob which complex structure on the equator can support $\gamma$ as a holomorphic cycle. In this case, the central charge is nothing but
    \begin{align}\label{736}
     Z_{\gamma}= \mbox{(symplectic area) }e^{i(\vartheta+\pi/2)}
    \end{align} from the Proposition \ref{49}.
In particular, the moduli space for family
$\mathcal{M}_{k,\gamma}(\mathfrak{X},L)$ has the same underlying
space as the usual moduli space of holomorphic discs
$\mathcal{M}_{k,\gamma}(X_{\vartheta},L)$, where  $\vartheta=\mbox{Arg}Z_{\gamma}-\pi/2$. We might drop the subindex $\vartheta$ when the target is clear. However, the tangent-obstruction theory (or the Kuranishi structures) on $\mathcal{M}_{k,\gamma}(\mathfrak{X},L)$ and $\mathcal{M}_{k,\gamma}(X_{\vartheta},L)$ are different. 

Now assume that the moduli space $\mathcal{M}_{\gamma_u}(\mathfrak{X},L_u)$ has  non-empty real codimension one boundary for some $\gamma_u\in H_2(X,L_u)$. Namely, we have 
  \begin{align*}
  \gamma_u=\gamma_{1,u}+\gamma_{2,u} \in H_2(X,L_u)
  \end{align*} and $\mathcal{M}_{\gamma_{i,u}}(X_{\vartheta},L_u)$ are non-empty, for $i=1,2$ and $\vartheta\in S^1$. In particular, we have $Z_{\gamma_1}(u)Z_{\gamma_2}(u)\neq 0$ from (\ref{736}) and 
     \begin{align*}
      \mbox{Arg}Z_{\gamma}(u)=\mbox{Arg}Z_{\gamma_1}(u)=\mbox{Arg}_{\gamma_2}(u)=\vartheta+\pi/2.
     \end{align*}
     The interesting implication is that we may not always have bubbling phenomenon of the moduli space $\mathcal{M}_{\gamma}(\mathfrak{X},L_u)$ unless the torus fibre $L_u$ sits over the locus characterized by
   \begin{align} \label{700}
    \mbox{Arg}Z_{\gamma_1}=\mbox{Arg}Z_{\gamma_2}.
   \end{align} 
Assume that $Z_{\gamma_1}$ is not a multiple of $Z_{\gamma_2}$. Since the central charges are holomorphic functions, the equation (\ref{700}) locally is pluriharmonic and defines a real analytic pseudoconvex hypersurface. In particular, the mean value property of pluriharmonic functions implies that locally this hypersurface divides the base into chambers. If $Z_{\gamma_1}=kZ_{\gamma_2}$, then $Z_{\gamma_1-k\gamma_2}=0$. In particular, $dZ_{\gamma_1-k\gamma_2}=0$ together with Lemma \ref{34} implies 
  \begin{align*}
\partial \gamma_1-k\partial \gamma_2=0\in H_1(L,\mathbb{Z})\cong \mathbb{Z}^2
  \end{align*}
 Thus, from the exact sequence (\ref{731}) there exists positive integers $k_1=kk_2,k_2$, and $\partial\gamma'\in H_1(L,\mathbb{Z})$, such that we have
 \begin{align*}
 \partial\gamma_i=k_i\partial \gamma' \in H_1(L,\mathbb{Z})), \mbox{ $i=1,2$}.
 \end{align*} and 
 \begin{align*}
     \partial\gamma=\partial\gamma_1+\partial\gamma_2=(k_1+k_2)\partial\gamma'.
 \end{align*}Thus, $\partial \gamma$ is not primitive. To sum up, we have proved the following theorem:
   
   \begin{thm}\label{811} Assume that 
    \begin{enumerate}
          \item the relative class $\gamma\neq 0$, with $\partial \gamma \neq 0\footnote{For the case $\partial \gamma=0$, one also has to consider the situation when a rational curve with a point on $L$ appears as real codimension one boundary of the moduli space}\in H_1(L)$ being primitive.
           
          \item The fibre $L_u$ does not sit over the (real analytic) Zariski closed subset of $B_0$ locally given by 
                \begin{align}
                  W'_{\gamma_1,\gamma_2}=&\{u\in B_0|\mbox{ there are holomorphic discs in relative class } \\ \notag
                  & \mbox{$\gamma_{1,u}$ and $\gamma_{2,u}$ with respect to same complex structure $J_{\vartheta}$, for some $\vartheta\in S^1$}\}. 
                \end{align}Notice that there are only finitely many possible decomposition $\gamma=\gamma_1+\gamma_2$ such that $W'_{\gamma_1,\gamma_2}$ is non-empty by Gromov compactness theorem.
        \end{enumerate}
   Then the moduli space $\mathcal{M}_{k,\gamma}(\mathfrak{X},L_u)$ has no real codimension one boundary.
\end{thm}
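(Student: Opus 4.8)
\emph{Proof strategy.} The plan is to show that any real codimension one stratum of $\mathcal{M}_{k,\gamma}(\mathfrak{X},L_u)$ would force $L_u$ to lie over one of the loci $W'_{\gamma_1,\gamma_2}$, which is forbidden by the hypotheses. First I would recall that, by the general structure of the Kuranishi (or virtual) theory on moduli of stable pseudo-holomorphic discs, the real codimension one boundary of $\mathcal{M}_{k,\gamma}(\mathfrak{X},L_u)$ consists of configurations obtained by splitting off a disc component: sphere bubbling occurs in real codimension at least two, and the only remaining borderline case, a rational curve attached at a boundary node or marked point (the phenomenon flagged in the footnote to hypothesis (1)), does not occur once $\partial\gamma$ is assumed primitive, hence nonzero. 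So a nonempty real codimension one boundary yields a splitting $\gamma=\gamma_1+\gamma_2$ in $H_2(X,L_u)$ with $\mathcal{M}_{\gamma_{i,u}}(\mathfrak{X},L_u)\neq\emptyset$ for $i=1,2$. Invoking the identification of underlying spaces recorded around (\ref{736})---a disc in class $\gamma_i$ in the family $\mathfrak{X}$ is exactly a $J_{\vartheta_i}$-holomorphic disc with $\vartheta_i=\mbox{Arg}Z_{\gamma_i}(u)-\pi/2$---and noting that two bubble components glue back to a single disc in $\mathfrak{X}$ only if they share the same complex structure, I obtain $\mbox{Arg}Z_{\gamma_1}(u)=\mbox{Arg}Z_{\gamma_2}(u)=\mbox{Arg}Z_\gamma(u)$, i.e. $u\in W'_{\gamma_1,\gamma_2}$.

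Next I would run the dichotomy on whether $Z_{\gamma_1}$ and $Z_{\gamma_2}$ are proportional over $\mathbb{R}$ near $u$. If they are not, then on a neighborhood of $u$ in $B_0$ the set $\{\mbox{Arg}Z_{\gamma_1}=\mbox{Arg}Z_{\gamma_2}\}$ is the zero locus of $\mbox{Im}\log(Z_{\gamma_1}/Z_{\gamma_2})$, which is well defined (the $Z_{\gamma_i}$ are nonvanishing by Corollary \ref{350}) and a nonconstant pluriharmonic function; its zero set is a proper real analytic hypersurface, so $W'_{\gamma_1,\gamma_2}$ is contained in a proper Zariski-closed subset of $B_0$, over which $L_u$ does not sit by hypothesis (2). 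If instead $Z_{\gamma_1}=kZ_{\gamma_2}$ for a constant $k$, then $k>0$ since the arguments agree, and $Z_{\gamma_1-k\gamma_2}\equiv 0$; hence $dZ_{\gamma_1-k\gamma_2}=0$, and Lemma \ref{34} gives $\partial\gamma_1-k\,\partial\gamma_2=0$ in $H_1(L_u,\mathbb{Z})\cong\mathbb{Z}^2$. Integrality of $\partial\gamma_1,\partial\gamma_2$ makes $k$ rational, and running the exact sequence (\ref{731}) as in the discussion preceding the statement produces a common primitive class $\partial\gamma'$ and positive integers $k_1,k_2$ with $\partial\gamma_i=k_i\,\partial\gamma'$, whence $\partial\gamma=(k_1+k_2)\,\partial\gamma'$ is not primitive, contradicting hypothesis (1). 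In either case no such splitting exists, so the boundary is empty. Finally, Gromov compactness bounds the symplectic areas of the bubble components, so only finitely many splittings need to be considered and the union of the excluded loci is still a proper Zariski-closed subset of $B_0$.

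The main obstacle I anticipate is the very first step: rigorously establishing that the real codimension one boundary of the \emph{family} moduli space $\mathcal{M}_{k,\gamma}(\mathfrak{X},L_u)$ is exhausted by disc bubbling of the stated form and agrees, stratum by stratum, with the boundary of the ordinary disc moduli $\mathcal{M}_{k,\gamma}(X_\vartheta,L_u)$. This requires a careful setup of the Kuranishi structure on $\mathcal{M}_{k,\gamma}(\mathfrak{X},L_u)$---including the compactification, the gluing analysis at a boundary node, and the check that the $S^1$-family introduces no new codimension one phenomena---together with a clean argument that rational-curve bubbling hitting $L_u$ is genuinely higher codimension when $\partial\gamma$ is primitive. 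Granting that description, the remaining steps are exactly the chain of elementary observations---pluriharmonicity of $\mbox{Arg}Z_{\gamma_1}-\mbox{Arg}Z_{\gamma_2}$ and the primitivity computation via Lemma \ref{34} and (\ref{731})---assembled in the paragraphs preceding the theorem, and should be routine.
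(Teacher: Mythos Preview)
Your proposal is correct and follows essentially the same route as the paper: the argument is exactly the discussion immediately preceding the theorem statement (disc bubbling gives a splitting $\gamma=\gamma_1+\gamma_2$, the phases of $Z_{\gamma_i}$ agree via (\ref{736}), then the dichotomy on proportionality is handled by the pluriharmonic/primitivity arguments using Lemma~\ref{34} and (\ref{731})). One tiny correction: Corollary~\ref{350} asserts $dZ_\gamma\neq 0$, not $Z_\gamma\neq 0$; the nonvanishing of $Z_{\gamma_i}(u)$ you need comes instead from the positivity of symplectic area in (\ref{736}), as the paper notes.
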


 Since the topology of $L_u$ are torus, we will use the trivial spin structure for defining orientations on $\mathcal{M}_{k,\gamma}(\mathfrak{X},L_u)$ \footnote{For general holomorphic Lagrangian $L$ with non-trivial deformation, let $B$ be the moduli space of deformation (as holomorphic Lagrangians) of $L$ in $(X,\Omega)$ which is an analytic variety. Let $B_0=B\backslash \Delta$, where $\Delta$ is the discriminant locus. Since deformation of smooth holomorphic Lagrangians is unobstructed, $B_0$ is a smooth complex manifold. All the argument will be still valid but one need to include the relative spin structures as the decoration of relevant moduli spaces to define the invariants.} For each point $u\in B_0$, it represents a holomorphic Lagrangian $L_u$. Using the techniques in \cite{FOOO}\cite{F1}, the virtual fundamental cycle $[\mathcal{M}_{k,\gamma}(\mathfrak{X},L_u)]^{vir}$ is defined in \cite{L4}. Using the de Rham model introduced in \cite{F1}, we have the following definition of open Gromov-Witten invariants.

\begin{definition}\label{733}Under the same assumption of Theorem \ref{811}.
Let $\phi_1,\cdots,\phi_k \in H^*(L_u)$, the open Gromov-Witten invariants are defined by
 \begin{align}  \label{705}
   \langle \phi_1,\cdots, \phi_k\rangle_{k,u,\gamma}=\int_{[\mathcal{M}_{k,\gamma}(\mathfrak{X},L_u)]^{vir}}\bigwedge_{i=1}^k ev^*\phi_i.
 \end{align} 
\end{definition}
 
 For dimension reason, the expression (\ref{705}) vanishes unless 
       \begin{align*}
        n-2=\sum (\mbox{deg}\phi_i-1). 
       \end{align*}   

\begin{rmk}
   On can actually construct the Kuranishi structures and perturbed multi-sections on $\{\mathcal{M}_{k,\gamma}(\mathfrak{X},L)\}_{k\geq 0}$ with certain compatibility conditions similar to \cite{L4}. In particular, there exists a cyclic filtered $A_{\infty}$
   algebra structure modulo $T^{E_0}$ on $H^*(L\times
   S^1_{\vartheta})$ with $1$ as a strict unit. Moreover, the structure
   is independent of the choice of Kuranishi structures and
   multi-sections chosen, up to pseudo-isotopy of inhomogeneous cyclic
   filtered $A_{\infty}$ algebras.
\end{rmk}
       
The Definition \ref{733} a priori depends on the Lagrangian boundary condition $L_u$. Let $u_0,u_1\in B_0$ and assume that there is a path $p(t):[0,1]\rightarrow B_0$ such that $p(0)=u_0$ and $p(1)=u_1$. Let $\gamma_i\in H_2(X,L_{u_i})$ such that they are parallel transport of each other via the Gauss-Manin connection of the local system $\Gamma$ along the path $p(t)$, thus we will denote them just by $\gamma$. Similiarly, We identify the cohomology $H^*(L_{u_0})$ and $H^*(L_{u_1})$ this way and just denote it by $H^*(L)$. Then the corbordism argument shows that the open Gromov-Witten invariants (\ref{705}) is locally an constant.  
    \begin{thm} \label{708}
      Assume $p(t)\not \in W'_{\gamma}$ for every $t\in [0,1]$, then 
        \begin{align*}
           \langle \phi_1,\cdots, \phi_k\rangle_{k,u_0,\gamma}= \langle \phi_1,\cdots, \phi_k\rangle_{k,u_1,\gamma},
        \end{align*}for $\phi_1,\cdots,\phi_k\in H^*(L)$.
    \end{thm}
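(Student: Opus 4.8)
The plan is to run the standard family (cobordism) argument over the path $p$. Form the parametrized moduli space
\[
\mathcal{M}_{k,\gamma}(\mathfrak{X},L,p):=\bigsqcup_{t\in[0,1]}\mathcal{M}_{k,\gamma}(\mathfrak{X},L_{p(t)}),
\]
which carries a Kuranishi structure of virtual dimension $n+k-1$, one more than that of each fibre, and which restricts over $t=0,1$ to the Kuranishi structures on $\mathcal{M}_{k,\gamma}(\mathfrak{X},L_{u_0})$ and $\mathcal{M}_{k,\gamma}(\mathfrak{X},L_{u_1})$ used in Definition \ref{733}; the existence of such a structure, together with a compatible system of perturbed multisections, follows from the constructions in \cite{L4} along the lines of \cite{FOOO}\cite{F1}. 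Since the energy $|Z_{\gamma}|$ is determined by $\gamma$ via (\ref{736}) and is locally bounded along $p$, Gromov compactness shows this space is compact.

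Next I would identify the boundary $\partial\mathcal{M}_{k,\gamma}(\mathfrak{X},L,p)$. It consists of the two fibres over the endpoints, $\mathcal{M}_{k,\gamma}(\mathfrak{X},L_{u_1})$ and $\mathcal{M}_{k,\gamma}(\mathfrak{X},L_{u_0})$ with opposite induced orientations, together with the real codimension one strata coming from disc bubbling, i.e. strata lying over $W'_{\gamma_1,\gamma_2}$ for some decomposition $\gamma=\gamma_1+\gamma_2$. Because $\partial\gamma\neq0$ is primitive (the hypothesis of Theorem \ref{811}, in force here), the discussion preceding Theorem \ref{811} shows that any such decomposition forces $\mbox{Arg}\,Z_{\gamma_1}=\mbox{Arg}\,Z_{\gamma_2}$, hence the boundary fibre would have to sit over $W'_{\gamma_1,\gamma_2}$; by Gromov compactness only finitely many decompositions have $W'_{\gamma_1,\gamma_2}\neq\emptyset$, so $W'_\gamma:=\bigcup_{\gamma_1+\gamma_2=\gamma}W'_{\gamma_1,\gamma_2}$ is a genuine closed real-analytic subset of real codimension one in $B_0$. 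The hypothesis $p(t)\notin W'_\gamma$ for all $t$ then guarantees, by Theorem \ref{811} applied fibrewise, that no bubbling stratum occurs, so $\partial\mathcal{M}_{k,\gamma}(\mathfrak{X},L,p)=\mathcal{M}_{k,\gamma}(\mathfrak{X},L_{u_1})\sqcup\big(-\mathcal{M}_{k,\gamma}(\mathfrak{X},L_{u_0})\big)$ as oriented spaces with Kuranishi structure.

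Then I would choose differential form representatives in families. Identifying $\{L_{p(t)}\}_{t\in[0,1]}$ via the Gauss–Manin connection trivializes the total space $\mathcal{L}:=\bigcup_t L_{p(t)}\to[0,1]$, so there are closed forms $\widetilde\phi_i$ on $\mathcal{L}$ restricting on each fibre to a representative of the parallel transported class $\phi_i$. Pulling back by the fibrewise evaluation maps $ev_i\colon\mathcal{M}_{k,\gamma}(\mathfrak{X},L,p)\to\mathcal{L}$ gives a closed form $\bigwedge_{i=1}^k ev_i^*\widetilde\phi_i$ of degree $\sum_i\deg\phi_i$. If this degree is not $n+k-2$ both sides of the claimed identity vanish by the dimension count following Definition \ref{733}, so we may assume it equals $n+k-2=\dim\mathcal{M}_{k,\gamma}(\mathfrak{X},L,p)-1$. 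Stokes' theorem for the virtual fundamental chain then gives
\[
0=\int_{[\mathcal{M}_{k,\gamma}(\mathfrak{X},L,p)]^{vir}} d\Big(\bigwedge_{i=1}^k ev_i^*\widetilde\phi_i\Big)=\int_{\partial[\mathcal{M}_{k,\gamma}(\mathfrak{X},L,p)]^{vir}}\bigwedge_{i=1}^k ev_i^*\widetilde\phi_i,
\]
and inserting the boundary description yields $\langle\phi_1,\dots,\phi_k\rangle_{k,u_1,\gamma}-\langle\phi_1,\dots,\phi_k\rangle_{k,u_0,\gamma}=0$.

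The main obstacle is not the Stokes computation but the foundational input: constructing the Kuranishi structure and perturbations on the parametrized moduli space so that they are genuinely compatible, both as forgetful structures and in orientations, with those used over the two endpoints, and verifying that away from the endpoint fibres the only real codimension one boundary is disc bubbling — which is exactly what Theorem \ref{811} excludes under the stated hypothesis. These are the technical points treated in \cite{L4} following \cite{FOOO}\cite{F1}; the orientation sign in the boundary formula is dictated by the trivial spin structure on the torus fibres fixed above.
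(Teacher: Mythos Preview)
Your proposal is correct and matches the paper's approach: the paper states the theorem with only the phrase ``the cobordism argument shows that the open Gromov-Witten invariants (\ref{705}) is locally a constant'' as justification, and what you have written is precisely a fleshed-out version of that cobordism argument, invoking Theorem \ref{811} fibrewise to rule out codimension-one bubbling along the path and then applying Stokes over the parametrized moduli space. Your write-up is in fact considerably more detailed than what the paper itself provides.
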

When the path $p(t)$ indeed intersects $W_{\gamma}$, the open Gromov-Witten invariants might jump. Therefore, it makes sense not to define the open Gromov-Witten invariants on $W'_{\gamma}$. We will discuss an example of non-trivial jumping of the open Gromov-Witten in the case of elliptic K3 surface in the next section.

In the definition of the invariant $ \langle \phi_1,\cdots, \phi_k\rangle_{k,u,\gamma}$, the family $\mathfrak{X}$ a priori depends on
a choice of Ricci-flat metric.
However, using the similar cobordism argument and Theorem \ref{708}, we have
\begin{cor} \label{216}
Assume $\omega,\omega'$ are Ricci-flat metrics such that the
corresponding invariants $ \langle \phi_1,\cdots, \phi_k\rangle_{k,u,\gamma}$ and
$ \langle \phi_1,\cdots, \phi_k\rangle'_{k,u,\gamma}$ are well-defined. Then
    \begin{align*}
        \langle \phi_1,\cdots, \phi_k\rangle_{k,u,\gamma}= \langle \phi_1,\cdots, \phi_k\rangle'_{k,u,\gamma}.\
    \end{align*}
\end{cor}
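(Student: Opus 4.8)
The plan is to run a one-parameter family version of the cobordism argument that already proves Theorem \ref{708}, but now varying the Ricci-flat metric rather than the base point $u$. Concretely, I would first invoke Yau's theorem to connect $\omega$ and $\omega'$: fixing the complex structure $J_3$ and the holomorphic symplectic form $\Omega$ (which together with the K\"ahler class determine the Ricci-flat metric by the Calabi conjecture), the K\"ahler cone is convex, so there is a smooth path $\omega_s$, $s\in[0,1]$, of K\"ahler classes with $\omega_0=\omega$, $\omega_1=\omega'$, and a corresponding smooth family of Ricci-flat metrics $g_s$. Each $g_s$ produces its own twistor family $\mathfrak{X}_s$ and, since $\Omega$ and hence the central charge $Z_\gamma$ is unchanged along the path (the period depends only on $\Omega$, not on the K\"ahler form), the angle $\vartheta = \mathrm{Arg}\,Z_\gamma - \pi/2$ singling out the relevant complex structure on the equator is \emph{independent of $s$}. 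Thus the whole deformation takes place among genuinely special Lagrangian boundary conditions $L_u$ for a fixed relative class $\gamma$, with no change of phase.

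Next I would assemble the parametrized moduli space $\mathcal{M}_{k,\gamma}(\mathfrak{X}_{[0,1]}, L_u) = \bigcup_{s\in[0,1]} \mathcal{M}_{k,\gamma}(\mathfrak{X}_s, L_u)$ and, following the Kuranishi-structure machinery of \cite{FOOO}\cite{F1} as adapted in \cite{L4}, equip it with a Kuranishi structure with boundary and corners whose virtual dimension is $n+k-1$, restricting on the two ends to the Kuranishi structures defining $\langle\cdots\rangle_{k,u,\gamma}$ and $\langle\cdots\rangle'_{k,u,\gamma}$. Choosing a compatible system of perturbed multisections extending the given ones on $s=0,1$, Stokes' theorem for the de Rham model gives
\begin{align*}
  \langle \phi_1,\cdots,\phi_k\rangle'_{k,u,\gamma} - \langle \phi_1,\cdots,\phi_k\rangle_{k,u,\gamma}
  = \int_{\partial_{\mathrm{vert}}[\mathcal{M}_{k,\gamma}(\mathfrak{X}_{[0,1]},L_u)]^{vir}} \bigwedge_{i=1}^k ev^*\phi_i,
\end{align*}
where $\partial_{\mathrm{vert}}$ denotes the boundary strata \emph{other} than the two ends $s=0,1$. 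It remains to see that this term vanishes.

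The vanishing is exactly where the hypotheses of Theorem \ref{811} re-enter: the only possible codimension-one boundary strata of $\mathcal{M}_{k,\gamma}(\mathfrak{X}_s,L_u)$ come from disc bubbling $\gamma = \gamma_1+\gamma_2$, and by the argument preceding Theorem \ref{811} such a configuration with $\mathcal{M}_{\gamma_i}(X_\vartheta,L_u)\neq\emptyset$ for a common $\vartheta$ forces $u\in W'_{\gamma_1,\gamma_2}$ or $\partial\gamma$ non-primitive; since $u$ is assumed to lie off all these walls and $\partial\gamma$ is primitive, no such stratum occurs for any $s$. (One should also dispose of the new stratum coming from the parameter endpoints, but those are the two integrals we already accounted for, and the stratum where a bubble carries zero relative class is excluded by the footnote hypothesis $\partial\gamma\neq 0$.) Hence $\partial_{\mathrm{vert}}[\mathcal{M}_{k,\gamma}(\mathfrak{X}_{[0,1]},L_u)]^{vir}=\emptyset$ and the difference is zero. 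The main obstacle I anticipate is the technical bookkeeping in the second step --- constructing the parametrized Kuranishi structure and a coherent extension of the multisections, and checking that the metric variation genuinely enters only through the (obstruction-theoretically tame) almost-complex structure of the twistor family so that Gromov compactness still applies uniformly in $s$; once that is in place, the geometric input is entirely supplied by Theorem \ref{811}.
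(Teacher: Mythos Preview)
Your proposal is correct and is precisely the argument the paper has in mind: the paper does not spell out a proof at all, stating only that the result follows from ``the similar cobordism argument and Theorem~\ref{708}'', and what you have written is exactly that cobordism argument, now run in the metric parameter $s$ rather than the base parameter $u$. Your observation that the central charge, and hence both the phase $\vartheta$ and the bubbling locus $\{\mathrm{Arg}\,Z_{\gamma_1}=\mathrm{Arg}\,Z_{\gamma_2}\}$, depend only on $\Omega$ and not on $\omega_s$ is the key point that makes the wall-avoidance hypothesis of Theorem~\ref{811} persist along the entire path, so that no interior codimension-one boundary appears in the parametrized moduli space.
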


\section{Ellitpic K3 Surfaces}
This section we will focus on the case when the hyperK\"ahler manifold $X$ is an elliptic K3 surface with $24$ singular $I_1$-type singular fibres. Since all the moduli space $\mathcal{M}_{0,\gamma}(\mathfrak{X},L)$ has virtual dimension zero, it is natural to understand the virtual count of holomorphic discs with no marked points. 

\begin{definition}
Let $X \rightarrow B$ be an K3 surface with elliptic fibration. For $u\in B_0$ and $\gamma \in H_2(X,L_u)$, the open Gromov-Witten invariants of elliptic K3 surface is defined to be
    \begin{align} \label{707}
     \tilde{\Omega}(\gamma;u):=\int_{[\mathcal{M}_{0,\gamma}(\mathfrak{X},L_u)]^{vir}}1.
    \end{align}
\end{definition}
Similarly as the discussion in the previous section, the invariant $\tilde{\Omega}(\gamma;u)$ is well-defined whenever $u$ does not fall on $W'_{\gamma}$. Unlike the general hyperK\"ahler case, we don't need the primitive and generic assumption to make $\tilde{\Omega}(\gamma;u)$ well-defined for dimension reason. Now we can define the wall of marginal stability as follows:
\begin{definition}
  Let $\gamma \in \Gamma$, the wall of marginal stability associated to $\gamma$ is defined by 
    \begin{align}
      W_{\gamma}:=\bigcup_{\gamma=\gamma_1+\gamma_2}\{u\in W'_{\gamma_1,\gamma_2}| \exists d_1,d_2\in \mathbb{N}, \mbox{ s.t. }\tilde{\Omega}(\frac{\gamma_1}{d_1};u)\tilde{\Omega}(\frac{\gamma_2}{d_2};u)\neq 0\}.
    \end{align}
\end{definition}
Since the central charge $Z$ is holomorphic, $W_{\gamma}$ locally is union of smooth real analytic curves, of real codimension one on $B$. 

\begin{rmk}
  For hyperK\"ahler surfaces, only a real codimension one locus of
  the space of almost complex structures can bound pseudo-holomorphic
  discs with Maslov index zero of a given relative class. A more general way of defining the invariant is to construct a
  $1$-parameter family of (almost) complex structures which is transverse
  to the above real codimension one locus. From Remark \ref{300}, the equator of the twistor line provides such a natural candidate of the
  auxiliary curve. The different choices of the auxiliary curves might give rise to different invariants. The difference is governed
  by the topological intersection of the real codimension one locus and the auxiliary curve, which can be viewed as the
  real analogue of the Noether-Lefschetz numbers in algebraic geometry. We will focus on this viewpoint more in \cite{L6}
\end{rmk}
Using the cobordism argument, we have the following proposition.
\begin{prop} \label{352}
Assume $\partial \gamma\neq 0\in H_1(L_{u_0})$ and
$\tilde{\Omega}(\gamma;u_0)$ is well-defined, then
  $\tilde{\Omega}(\gamma;u)$ is well-defined and locally a constant around
  $u_0$.
\end{prop}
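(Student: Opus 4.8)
The plan is to reduce everything to the cobordism argument already used for Theorem~\ref{708} with $k=0$, the only new ingredient being that the bad locus $W'_\gamma$ is closed with open complement. First I would note that, by construction, the hypothesis ``$\tilde{\Omega}(\gamma;u_0)$ is well-defined'' means exactly $u_0\notin W'_\gamma=\bigcup_{\gamma=\gamma_1+\gamma_2}W'_{\gamma_1,\gamma_2}$; by Gromov compactness only finitely many of the sets $W'_{\gamma_1,\gamma_2}$ are non-empty, and each non-empty one is a proper real-analytic (Zariski closed) subset of $B_0$. Hence $B_0\setminus W'_\gamma$ is open, so I may choose a small ball $U\ni u_0$ with $U\cap W'_\gamma=\emptyset$ over which the local system $\Gamma$ is trivialized, letting $\gamma$ extend to each $L_u$, $u\in U$, by Gauss--Manin parallel transport. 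For every such $u$ the argument of Theorem~\ref{811} applies --- and, $X$ being a K3 surface with $k=0$, it does not require $\partial\gamma$ to be primitive, as remarked after the definition of $\tilde{\Omega}$ --- so that together with Gromov compactness, $\mathcal{M}_{0,\gamma}(\mathfrak{X},L_u)$ is compact and carries no real codimension one boundary; thus $[\mathcal{M}_{0,\gamma}(\mathfrak{X},L_u)]^{vir}$ is a genuine $0$-cycle and $\tilde{\Omega}(\gamma;u)$ is well-defined for all $u\in U$.

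For local constancy I would run the family cobordism. Given $u_1\in U$, pick a path $p\colon[0,1]\to U$ with $p(0)=u_0$ and $p(1)=u_1$, and form the parametrized moduli space $\mathcal{M}^{\mathrm{fam}}_{0,\gamma}:=\bigcup_{t\in[0,1]}\mathcal{M}_{0,\gamma}(\mathfrak{X},L_{p(t)})$, of virtual dimension one. Following \cite{L4} (which builds on \cite{FOOO}\cite{F1}) one equips it with a Kuranishi structure and a perturbed multisection compatible with the fibrewise ones over $p(0)$ and $p(1)$, orientations coming from the trivial spin structure on the torus fibres. The real codimension one boundary of the perturbed $\mathcal{M}^{\mathrm{fam}}_{0,\gamma}$ consists only of: (i) the two fibres over $t=0$ and $t=1$; (ii) disc bubblings $\gamma=\gamma_1+\gamma_2$, which by (\ref{736}) can occur at a parameter $t$ only when $\mbox{Arg}\,Z_{\gamma_1}(p(t))=\mbox{Arg}\,Z_{\gamma_2}(p(t))$, i.e. $p(t)\in W'_{\gamma_1,\gamma_2}\subseteq W'_\gamma$, impossible since $p$ stays in $U$; (iii) bubbling of a rational curve through a boundary point, excluded by the hypothesis $\partial\gamma\neq 0$ (cf. the footnote to Theorem~\ref{811}); interior sphere bubbling is real codimension at least two and does not enter. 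Therefore $\partial\mathcal{M}^{\mathrm{fam}}_{0,\gamma}=\mathcal{M}_{0,\gamma}(\mathfrak{X},L_{u_0})\sqcup\bigl(-\mathcal{M}_{0,\gamma}(\mathfrak{X},L_{u_1})\bigr)$ as oriented $0$-manifolds, and since the signed count of the boundary of a compact oriented $1$-manifold is zero, $\tilde{\Omega}(\gamma;u_0)=\tilde{\Omega}(\gamma;u_1)$. As $u_1\in U$ was arbitrary and $U$ is connected, $\tilde{\Omega}(\gamma;\cdot)$ is constant near $u_0$.

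The routine inputs are Gromov compactness and the exclusion of the bubbling strata above; the analytic core, which I would simply cite from \cite{L4}, is the construction of the family Kuranishi structure over the twistor parameter $\vartheta(t)=\mbox{Arg}\,Z_\gamma(p(t))-\pi/2$, together with the verification that it creates no new codimension one stratum for paths avoiding $W'_\gamma$. The delicate points there are (a) that the target is the twistor \emph{family} rather than a fixed $X_\vartheta$, so one must use that the phase of the central charge pins every $\gamma$-holomorphic configuration to a single $X_{\vartheta(t)}$ and that the resulting one-parameter deformation is transverse to the wall --- exactly the picture in the Remark following the definition of $W_\gamma$ --- and (b) checking that the trivial spin structure induces opposite orientations on the two ends $\mathcal{M}_{0,\gamma}(\mathfrak{X},L_{u_0})$ and $\mathcal{M}_{0,\gamma}(\mathfrak{X},L_{u_1})$. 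I expect (a) to be the main obstacle, though it is already handled in the construction of $[\mathcal{M}_{k,\gamma}(\mathfrak{X},L)]^{vir}$ in \cite{L4}.
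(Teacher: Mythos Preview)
Your proposal is correct and follows exactly the approach the paper indicates: the paper's entire proof is the sentence ``Using the cobordism argument, we have the following proposition'' together with the remark that ``the argument is valid as long as the path we choose does not hit the wall of marginal stability,'' i.e.\ precisely the specialization of Theorem~\ref{708} to $k=0$ that you spell out. Your write-up supplies the details (openness of $B_0\setminus W'_\gamma$, boundary-stratum analysis, the role of $\partial\gamma\neq 0$ in excluding the rational-curve-through-a-boundary-point stratum) that the paper leaves implicit or defers to \cite{L4}.
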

The argument is valid as long as the the path we choose does not hit the wall of marginal stability. In other words, the wall of marginal stability locally divide the base into chambers and $\tilde{\Omega}(\gamma)$ is a constant inside each chamber locally. 

Since as topological spaces,  $\mathcal{M}_{0,\gamma}(\mathfrak{X},L_u)$ and $\mathcal{M}_{0,-\gamma}(\mathfrak{X},L_u)$ are homeomorphic. By checking the orientation, we prove that $\tilde{\Omega}(\gamma)$ satisfies the "reality condition", which is expected for gneralized Donaldson-Thomas invariants in the twistorial construction of hyperK\"ahler metric \cite{GMN} (or see the review in section 6):
\begin{thm}\label{800}\cite{L4} If $\tilde{\Omega}(\gamma;u)$ is well-defined,
then $\tilde{\Omega}(-\gamma;u)$ is also well-defined.
Moreover, we have the reality condition
  \begin{align*}
   \tilde{\Omega}(\gamma;u)=\tilde{\Omega}(-\gamma;u).
  \end{align*}
\end{thm}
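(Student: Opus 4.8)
The plan is to establish the reality condition $\tilde{\Omega}(\gamma;u)=\tilde{\Omega}(-\gamma;u)$ by exhibiting an explicit homeomorphism between the underlying moduli spaces $\mathcal{M}_{0,\gamma}(\mathfrak{X},L_u)$ and $\mathcal{M}_{0,-\gamma}(\mathfrak{X},L_u)$ and then tracking what this map does to the Kuranishi data and the chosen orientation. The homeomorphism itself comes from the hyperK\"ahler rotation trick together with conjugation: reversing the relative class $\gamma\mapsto -\gamma$ changes $\mbox{Arg}Z_{\gamma}$ by $\pi$, hence replaces the complex structure $J_{\vartheta}$ (with $\vartheta = \mbox{Arg}Z_\gamma - \pi/2$) by $J_{\vartheta+\pi}=J_{-\zeta}$. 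On the level of maps, precomposing a $J_\vartheta$-holomorphic disc $u\colon (D^2,\partial D^2)\to (X_\vartheta, L_u)$ with the complex-conjugation (orientation-reversing) automorphism of the disc $z\mapsto \bar z$ of $D^2$, and postcomposing with the identity on the underlying smooth manifold $\underline X$ (which is anti-holomorphic as a map $(X_\vartheta,J_\vartheta)\to (X_{\vartheta+\pi},J_{\vartheta+\pi})$ since $J_{-\zeta}=-J_\zeta$ after the relevant identification), sends a stable disc in class $\gamma$ to a stable disc in class $-\gamma$, and this is a homeomorphism on the compactified moduli spaces because it respects stable-map degenerations. So the first step is to spell out this involution carefully, including its effect on marked points (here there are none, $k=0$, so ordering is a non-issue) and on nodal configurations.

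The second step is the comparison of Kuranishi structures: I would argue that the above map is compatible with the deformation theory used to build $[\mathcal{M}_{0,\gamma}(\mathfrak{X},L_u)]^{vir}$, so that it identifies Kuranishi charts and obstruction bundles up to the induced linear isomorphism, and hence pushes the virtual fundamental cycle of one to $\pm$ the virtual fundamental cycle of the other. The sign is exactly what needs to be computed. The third step, the orientation bookkeeping, is where the trivial spin structure on the torus fibre $L_u$ enters: the orientation on moduli of discs is constructed (following \cite{FOOO}\cite{F1}) from the spin structure on the Lagrangian together with an orientation on $\det$ of the linearized Cauchy-Riemann operator, and one has to check that complex conjugation on the domain, the anti-holomorphic identification of targets, and the reversal $\gamma\mapsto -\gamma$ of the Maslov-index-zero class together preserve (rather than reverse) this orientation. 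Because $L_u$ is a $2$-torus with trivial spin structure and the Maslov index is zero, the relevant index bundle has even rank and the conjugation acts on $\det$ by $(-1)$ to an even power coming from the complex structure on the kernel/cokernel, so the signs cancel; making this precise is the content of the argument and I expect it to be the main obstacle.

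The hard part will be precisely this orientation sign computation: one must reconcile the orientation-reversing nature of $z\mapsto\bar z$ on $D^2$ with the orientation-reversing nature of the target identification $(X_\vartheta,J_\vartheta)\dashrightarrow(X_{\vartheta+\pi},J_{\vartheta+\pi})$, and confirm that together with the fact that $\partial\gamma$ has Maslov index $0$ (so no extra contribution from the boundary determinant line) the net effect on the canonical orientation of $[\mathcal{M}_{0,\gamma}(\mathfrak{X},L_u)]^{vir}$ is trivial. I would handle this by reducing to the linear model: a Maslov-index-zero real boundary value problem on the disc for a complex vector bundle with totally real boundary condition, where the determinant line is canonically trivialized by the trivial spin/framing, and then checking directly that the conjugation-induced map on this determinant line is $+1$. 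Once the sign is pinned down as $+1$, integrating the constant $1$ over the identified virtual cycles gives $\tilde{\Omega}(\gamma;u)=\tilde{\Omega}(-\gamma;u)$, and the well-definedness of $\tilde{\Omega}(-\gamma;u)$ follows from the same homeomorphism transporting the codimension-one-boundary analysis of Theorem \ref{811} (the decompositions $\gamma=\gamma_1+\gamma_2$ correspond bijectively to decompositions $-\gamma=(-\gamma_1)+(-\gamma_2)$, and $W'_{\gamma_1,\gamma_2}=W'_{-\gamma_1,-\gamma_2}$).
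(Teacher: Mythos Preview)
Your proposal is correct and follows the same approach the paper sketches: the paper merely records (in the sentence immediately preceding the theorem) that $\mathcal{M}_{0,\gamma}(\mathfrak{X},L_u)$ and $\mathcal{M}_{0,-\gamma}(\mathfrak{X},L_u)$ are homeomorphic as topological spaces and that one then checks the orientation, deferring all details to \cite{L4}. Your plan spells out exactly this homeomorphism (via $z\mapsto\bar z$ on the domain together with $J_\vartheta\mapsto J_{\vartheta+\pi}=-J_\vartheta$ on the target) and correctly isolates the orientation/sign computation on the determinant line as the crux, which is precisely what the paper indicates.
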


We expect that there is a corresponding ``Gopakumar-Vafa type" invariants for this open Gromov-Witten invariant $\tilde{\Omega}(\gamma)$.  Moreover, they are related by a M\"obius inversion type transform.
\begin{conj} \label{222}
  There exists $\{\Omega(\gamma;u)\in \mathbb{Z}| \gamma \in H_2(X,L_u)\}$
  such that
     \begin{equation*}
         \tilde{\Omega}(\gamma;u)=\sum_{d>
         0}\pm d^{-2}\Omega(\gamma/d;u),
     \end{equation*} where the $\pm$ sign depends on the quadratic refinement \cite{GMN}.
\end{conj}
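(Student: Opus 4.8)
The plan is to reduce the conjecture to a multiple cover formula for holomorphic discs with boundary on a special Lagrangian torus fibre, and then to an integrality statement for the $\mathbb{Q}$-valued invariant obtained by inverting that formula; since the assertion is only conjectural, what follows is a strategy rather than a complete argument. First I would analyze the strata of $\mathcal{M}_{0,\gamma}(\mathfrak{X},L_u)$: by Gromov compactness they are indexed by degenerations of a stable disc of class $\gamma$, and the only contributions not already governed by strictly smaller classes are the multiply covered ones. Concretely, whenever $\partial\gamma=d\cdot(\mbox{primitive class})$ and $\gamma':=\gamma/d\in H_2(X,L_u)$, one has $d$-fold covers $u\circ p_d$ of a disc $u$ of class $\gamma'$, where $p_d\colon(D^2,\partial D^2)\to(D^2,\partial D^2)$ is the standard degree $d$ branched cover $z\mapsto z^d$; since $(p_d)_*[D^2,\partial D^2]=d[D^2,\partial D^2]$, the composite $u\circ p_d$ lies in class $\gamma$. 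The crucial first step is to show that the contribution of the locus of such $d$-fold covers to $[\mathcal{M}_{0,\gamma}(\mathfrak{X},L_u)]^{vir}$ equals $\epsilon(\gamma,d)\,d^{-2}\,\tilde\Omega(\gamma';u)$, where $\epsilon(\gamma,d)\in\{\pm1\}$ is prescribed by the quadratic refinement $\sigma$ of \cite{GMN} (the one with $\sigma(\gamma_1+\gamma_2)=(-1)^{\langle\gamma_1,\gamma_2\rangle}\sigma(\gamma_1)\sigma(\gamma_2)$). This is the open analogue of the Aspinwall--Morrison multiple cover formula; because the torus fibre is a curve, the Kuranishi model of a cover should be computable directly, one factor $d^{-1}$ coming from the automorphisms of $p_d$ and a second from the obstruction space along the cover.

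Granting this, the system $\tilde\Omega(\gamma;u)=\sum_{d>0}\epsilon(\gamma,d)\,d^{-2}\,\Omega(\gamma/d;u)$, in which $\Omega(\gamma;u)$ denotes by definition the contribution of the ``new'' discs, is triangular with unit diagonal with respect to the divisibility index of $\partial\gamma$, hence can be inverted recursively over $\mathbb{Q}$: this expresses $\Omega(\gamma;u)$ as a finite $\mathbb{Q}$-linear combination of the $\tilde\Omega(\gamma/d;u)$, each of which is well-defined by Proposition \ref{352} off $W_\gamma$. The reality condition of Theorem \ref{800} survives the inversion, so $\Omega(\gamma;u)=\Omega(-\gamma;u)$; and Proposition \ref{352} together with Theorem \ref{708} give that $\Omega(\gamma;u)$ is locally constant in $u$ away from the wall of marginal stability $W_\gamma$.

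The remaining and genuinely hard step is integrality, $\Omega(\gamma;u)\in\mathbb{Z}$. I would pursue two complementary lines. The geometric one identifies $\Omega(\gamma;u)$ with an honest signed count, namely a weighted Euler characteristic of a moduli space of \emph{simple} discs --- those with primitive boundary that do not bubble --- or, after the hyperK\"ahler rotation of Remark \ref{300}, with a Donaldson--Thomas type invariant of the fibration $X_\vartheta\to B$ at $\vartheta=\mathrm{Arg}\,Z_\gamma-\pi/2$; for the elliptic K3 this is promising since the simple discs are explicit (Lefschetz thimbles over the $I_1$ fibres and their iterated gluings), so $\Omega$ should literally count points. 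The bootstrap line uses the Kontsevich--Soibelman wall-crossing formula \cite{KS1}: the torus-valued generating series $\prod_\gamma(1-\mathcal{X}_\gamma)^{\Omega(\gamma;u)}$ should transform by a KS automorphism across each wall $W_\gamma$ --- the phenomenon announced in the introduction --- and since the KS factors carry integer exponents, integrality propagates from any one chamber to all of $B_0$; it then suffices to verify $\Omega(\gamma;u)\in\mathbb{Z}$ in a single maximally degenerate chamber, for instance near the Ooguri--Vafa model of Section \ref{801}, where $\Omega$ of the primitive vanishing class is $1$ and all other $\Omega$ vanish.

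I expect the principal obstacle to be closing the geometric line without circularity: making the ``new disc'' count rigorous requires either a moduli space of simple discs carrying a well-defined virtual Euler characteristic, or an open analogue of the Gopakumar--Vafa reorganization producing a cohomology theory whose graded dimensions are the $\Omega(\gamma;u)$ --- precisely the structure not yet available in this setting. The wall-crossing bootstrap avoids this but is only as strong as our control of the KS formula for these invariants, which at present is established only in examples. For these reasons the statement is posed as a conjecture rather than a theorem.
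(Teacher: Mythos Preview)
The statement is a \emph{conjecture} in the paper, and the paper supplies no proof. The only evidence offered is the verification in the Ooguri--Vafa local model (Section~\ref{801} and Theorem~\ref{43}), where the direct computation $\tilde{\Omega}(d\gamma_e;u)=(-1)^{d-1}/d^2$ yields $\Omega(\pm\gamma_e;u)=1$ and $\Omega(\gamma;u)=0$ otherwise. You correctly frame your write-up as a strategy rather than a proof and explicitly acknowledge at the end that the statement remains conjectural, so there is no mismatch to report.

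Your outline in fact goes well beyond anything the paper attempts: the paper gives no argument toward the general case, whereas you sketch (i) a stratification of $\mathcal{M}_{0,\gamma}(\mathfrak{X},L_u)$ isolating the $d$-fold cover loci with a conjectural contribution $\epsilon(\gamma,d)\,d^{-2}$, (ii) a M\"obius-type inversion defining $\Omega(\gamma;u)$ over $\mathbb{Q}$, and (iii) two routes to integrality, one geometric and one via propagation along the Kontsevich--Soibelman wall-crossing formula from a chamber where integrality is known. This is a reasonable program and is the expected shape of any eventual proof.

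Two caveats on the strategy itself. First, the assertion that the automorphism factor $1/d$ and an obstruction-bundle factor $1/d$ combine to give exactly $d^{-2}$ is precisely the open multiple-cover computation that is \emph{not} a formal consequence of the closed Aspinwall--Morrison formula; in this paper it is established only in the local model (Theorem~\ref{43}, proved in \cite{L4}), so your ``crucial first step'' already contains the main analytic content. Second, your wall-crossing bootstrap presupposes Conjecture~\ref{63}, which the paper also leaves open; so the two conjectures are entangled and neither can currently be used to deduce the other.
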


\subsection{Local Model: the Ooguri-Vafa Space} \label{801}
All the definitions and arguments in the previous sections apply to any hyperK\"ahler surfaces (not necessarily compact) whenever the Gromov compactness holds.  The Ooguri-Vafa space is an elliptic fibration over a unit disc such that the only singular fibre is a nodal curve (or an $I_1$-type singluar fibre) over the origin. The singular central fibre breaks the $T^2$-symmetry into only $S^1$-symmetry. Using Gibbons-Hawkings ansatz, Ooguri and Vafa\cite{OV} wrote down Ricci-flat metrics with a $S^1$ symmetry and thus the Ooguri-Vafa space is hyperK\"ahler. From the discussion in section 5, there exists an $S^1$-family of integral affine structures on $B_0$. The central fibre is of $I_1$-type implies that the monodromy of affine structure (see the section 6.1) around the singularity is conjugate to $\bigl(
\begin{smallmatrix}
  1 & 1\\
  0 & 1
\end{smallmatrix} \bigr)$. Thus there exists an unique affine line $l_{\vartheta}$ passing through the singularity in the monodromy invariant direction.
      
Following the maximal principle trick in \cite{A}\cite{C}, if we fix $\vartheta$, the only simple holomorphic discs are in the relative class of Lefschetz thimble $\gamma_e$ and with their boundaries on torus over $l_{\vartheta}$. Topologically, this holomorphic disc is the union of vanishing cycles over $l_{\vartheta}$. This reflects the fact the standard moduli space of holomorphic discs has virtual dimension minus one and thus generic torus fibres would not bound holomorphic discs. However, when the $\vartheta$ goes around $S^1$, the affine line $l_{\vartheta}$ will rotate $2\pi$ and every point on the base will be exactly swept once by $l_{\vartheta}$. In other words, every torus fibre bounds a unique simple holomorphic disc (up to orientation) in Ooguri-Vafa space but with respect to different complex structures. See Figure 1 below. 
\begin{figure}[htb]
\begin{center}
\includegraphics[height=3in,width=6in]{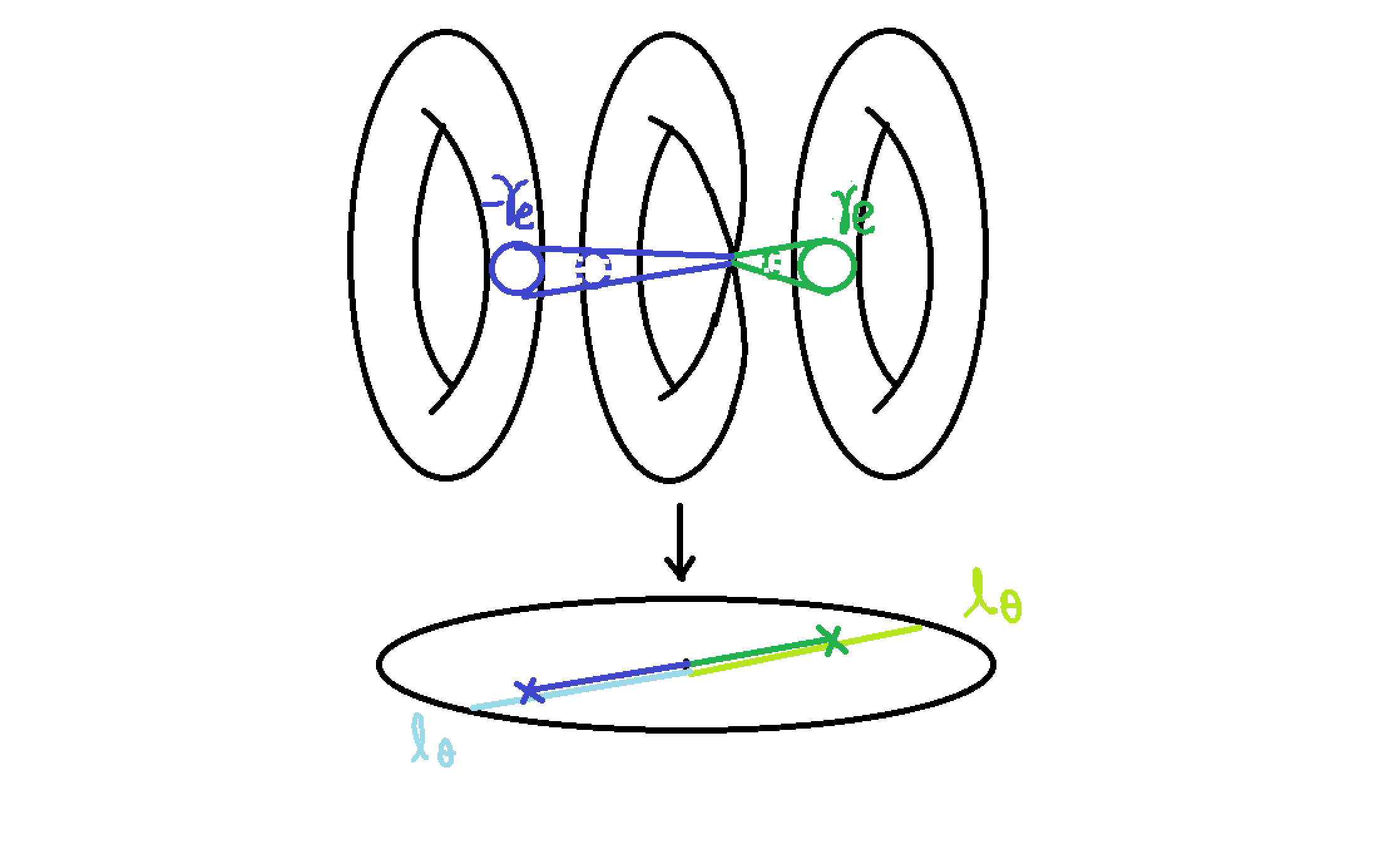}
\caption{The Ooguri-Vafa space and its unique simple holomorphic disc.}
\end{center}
\end{figure}
 
The unique simple holomorphic disc in the Ooguri-Vafa space is Fredholm regular in the $S^1$-family. Thus, it is straightforward to show that $\tilde{\Omega}(\pm\gamma_e)=1$. Moreover, we compute the multiple cover formula in \cite{L4},
   \begin{align*}
         \tilde{\Omega}(\gamma,u)=\begin{cases} \frac{(-1)^{d-1}}{d^2}& \text{, if $\gamma=  d\gamma_e$, $d\in \mathbb{Z}$}\\
                0& \text{, otherwise}.
                                                          \end{cases}
        \end{align*}
   
\begin{rmk}
  Notice that the invariant computed above indeed satisfies Conjecture \ref{222}. Moreover, it suggests that
     \begin{align*}
      \Omega(\gamma,u)=\begin{cases} 1& \text{if $\gamma= \pm\gamma_e,$}\\
             0& \text{otherwise,}
                                                       \end{cases}
     \end{align*}which coincides with the BPS counting for the Ooguri-Vafa space \cite{GMN}.
\end{rmk}        
        
 \subsection{Open Gromov-Witten Invariants from Local}
Before we trying to understand the open Gromov-Witten invariants on K3 surface, we first want to know the existence of holomorphic discs. It is natural to ask if the simple holomorphic disc in Ooguri-Vafa space can actually "live" near the $I_1$-singular fibre of a K3 surface. Let $X_{\vartheta}\rightarrow B$ be a K3 surface with special Lagrangian fibration. Assume $p$ is a singularity of complex affine structure corresponding to an $I_1$-type singular fibre. Similar to the proof of Proposition \ref{600}, there exists two affine rays (with respect to the complex affine coordinate) starting from $p$ such that $Z_{\gamma_e}$ has constant phase. Using the estimate in \cite{GW} and deformation of special Lagrangians with boundaries \cite{B}, we have the following result: 
\begin{thm} \label{60} \cite{L4}
  Let $u$ be a point on the above affine ray starting at the singular point $p$. Assume there is no other singular point of affine structure on the affine segment between $u$ and $p$. Then there exists $\epsilon_0=\epsilon_0(u)>0$ such that there exists an immersed holomorphic disc in the relative class $\gamma_e$ and boundary on $L_u$ in $X_{\vartheta}'$. Here $X_{\vartheta}'$ is the K3 surface with special Lagrangian fibration derived from hyperK\"ahler rotation but using any other different hyperK\"ahler metric $\omega'$ of elliptic fibred K3 surface $X$ such that $\int_{L_u}\omega'<\epsilon_0$. 
\end{thm}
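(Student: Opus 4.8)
The plan is to construct the desired holomorphic disc in $X'_\vartheta$ by deforming the model disc living near the $I_1$-singular fibre in the Ooguri--Vafa space, which is the natural local model for a neighbourhood of the singular fibre. The starting point is the observation (analogous to the discussion around Figure~1 and the maximum-principle argument of \cite{A}\cite{C}) that over the affine ray emanating from $p$ in the monodromy-invariant direction on which $Z_{\gamma_e}$ has constant phase, the torus fibre bounds a unique simple immersed disc in class $\gamma_e$ whose boundary is the vanishing cycle; this disc is Fredholm regular in the $S^1$-family. The idea is to transplant this family of discs into a neighbourhood of the $I_1$-fibre in the K3 surface $X$ and then run a gluing/implicit-function-theorem argument to correct it to an honest holomorphic disc for the metric $\omega'$ once $\int_{L_u}\omega'$ is small enough.

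First I would fix, for the given $u$ on the affine ray with no intervening singularity, the model immersed special Lagrangian disc $D_0$ in the Ooguri--Vafa neighbourhood, with $\partial D_0 \subset L_u$ and $[D_0] = \gamma_e$; by Proposition~\ref{49} the phase condition $\mathrm{Arg}\,Z_{\gamma_e} = \vartheta + \pi/2$ picks out exactly the complex structure $J_\vartheta$ for which $D_0$ is holomorphic. Next I would appeal to the gluing/deformation theory for special Lagrangians with boundary developed in \cite{B}, together with the metric convergence estimates of \cite{GW} comparing the Ricci-flat metric on $X$ near the singular fibre with the Ooguri--Vafa model: as $\int_{L_u}\omega' \to 0$ the torus fibre $L_u$ shrinks and the relevant neighbourhood of the singular fibre, rescaled, converges in $C^\infty_{loc}$ to the Ooguri--Vafa space. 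Then I would set up the nonlinear $\bar\partial$-operator on maps from the disc to $X'_\vartheta$ with Lagrangian boundary condition on $L_u$, linearise at the transplanted model disc, and use the Fredholm regularity of $D_0$ (surjectivity of the linearised operator, which is stable under small perturbations) to solve the equation by the implicit function theorem / Newton iteration, producing for $\int_{L_u}\omega' < \epsilon_0(u)$ an immersed holomorphic disc in class $\gamma_e$ with boundary on $L_u$.

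The main obstacle I expect is analytic rather than conceptual: making precise the sense in which the Ricci-flat K3 metric degenerates to the Ooguri--Vafa model near the $I_1$-fibre, and obtaining uniform (metric-dependent but controlled) bounds on the right inverse of the linearised operator so that the Newton iteration converges. This is where the estimates of \cite{GW} are essential, and one must be careful that the immersed (rather than embedded) nature of $D_0$, the noncompactness of the local model, and the moving Lagrangian boundary condition do not spoil the uniformity of the elliptic estimates; the hypothesis that there is no other singularity of the affine structure between $u$ and $p$ is precisely what keeps the model valid along the whole segment and prevents extra bubbling. Once these estimates are in hand, the existence statement follows in a standard way, and I would remark that the resulting disc, being a small perturbation of the Fredholm-regular model, is itself regular, so it contributes to $\tilde\Omega(\gamma_e;u)$ in $X'_\vartheta$ and the count is nonzero.
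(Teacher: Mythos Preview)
Your proposal is essentially correct and matches the paper's own (very terse) indication of proof: the paper does not prove Theorem~\ref{60} here but defers to \cite{L4}, noting only that the argument uses the Gross--Wilson estimates \cite{GW} together with the deformation theory of special Lagrangians with boundary \cite{B}. Your outline invokes exactly these two ingredients.

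One small point of emphasis is worth flagging. You oscillate between two frameworks: first you invoke the special Lagrangian deformation theory of \cite{B}, then you set up the nonlinear $\bar\partial$-operator for maps with Lagrangian boundary. In real dimension four these are the same problem viewed through hyperK\"ahler rotation---a $J_\vartheta$-holomorphic disc is special Lagrangian for the structure rotated by $\pi/2$ along the equator---so either route works. The paper (and presumably \cite{L4}) takes the special Lagrangian submanifold viewpoint, deforming the model Lefschetz thimble as a minimal surface with boundary rather than as a pseudo-holomorphic map; this has the mild advantage that the relevant linear theory in \cite{B} is formulated directly for immersed submanifolds, sidestepping the need to parametrize the immersed model disc as a map and control the $\bar\partial$ right inverse under degeneration. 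Your $\bar\partial$ route is more standard in symplectic topology and would also go through, but you should commit to one framework rather than mixing them.
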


\begin{rmk}
 When $\epsilon_0$ goes to zero, the size of special Lagrangian torus fibres also goes to zero. It is proved that the K3 surface collapses to the base affine manifold \cite{GW}. This is exactly the picture of large complex structure limit from the point of view of Strominger-Yau-Zaslow conjecture \cite{GW} \cite{KS4}\cite{SYZ}.
\end{rmk}
 
The first example we can compute the new invariant $\tilde{\Omega}$ is the following one and also its multiple cover formula: 
\begin{thm} \label{43} \cite{L4}
   Let $\gamma_e$ be the relative class of Lefschetz thimble around an $I_1$-type singular fibre, then given any $d_0\in \mathbb{N}$, there exists a non-empty neighborhood $\mathcal{U}$ of the singularity such that for each $u\in \mathcal{U}$, we have 
  \begin{align*}
   \tilde{\Omega}(d\gamma_{e}; u)=\frac{(-1)^{d-1}}{d^2}, \mbox{ for every integer $d$, $|d|\leq d_0$}.
  \end{align*}
   Moreover, for $u$ close enough to the singularity,
   $\pm\gamma_e$ are the only classes achieve minimum energy with $\tilde{\Omega}(\gamma)\neq 0$.
\end{thm}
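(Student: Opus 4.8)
The plan is to reduce the computation near an $I_1$-singular fibre to the Ooguri-Vafa model established in Section~\ref{801}, and then to bootstrap from the base case $d=1$ to all $|d|\le d_0$ by a combination of gluing/regularity and the multiple cover formula. First I would invoke Theorem~\ref{60} to produce, for each $d$ with $1\le d\le d_0$, an honest immersed holomorphic disc in the relative class $d\gamma_e$ with boundary on $L_u$, for $u$ on one of the two phase-constant affine rays emanating from the singular point $p$ and sufficiently close to $p$; the phase-constant condition guarantees that all the relevant discs $\gamma_e, 2\gamma_e,\dots,d_0\gamma_e$ are simultaneously holomorphic for the \emph{same} complex structure $J_\vartheta$ on the equator, with $\vartheta = \mbox{Arg}\,Z_{\gamma_e}(u)-\pi/2$. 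Because $\mbox{Arg}\,Z_{d\gamma_e} = \mbox{Arg}\,Z_{\gamma_e}$ for all $d>0$, the only possible real-codimension-one boundary strata of $\mathcal{M}_{0,d\gamma_e}(\mathfrak{X},L_u)$ come from splittings $d\gamma_e = d_1\gamma_e + d_2\gamma_e$; since $\partial\gamma_e$ is a primitive generator of $H_1(L_u)$, Theorem~\ref{811} does not directly apply, but the point is that taking $u$ close enough to $p$ forces the energy $|Z_{d\gamma_e}| = d\,|Z_{\gamma_e}|$ to be so small that Gromov compactness leaves $\pm\gamma_e$ as the unique minimal-energy class with a nonempty moduli space, hence the moduli spaces $\mathcal{M}_{0,d\gamma_e}(\mathfrak{X},L_u)$ are compact without boundary for each $d\le d_0$ once $\mathcal{U}$ is chosen small enough (intersecting the finitely many walls $W'_{\gamma_1,\gamma_2}$ is avoided by shrinking $\mathcal{U}$, using that each such wall is a Zariski-closed proper subset not containing the ray).

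Next I would pin down the actual value of the invariant. The key input is that the local geometry near an $I_1$-fibre is, after hyperK\"ahler rotation and in the collapsing limit, modeled on the Ooguri-Vafa space: the monodromy of the affine structure is conjugate to $\bigl(\begin{smallmatrix}1&1\\0&1\end{smallmatrix}\bigr)$, the vanishing cycle is $\partial\gamma_e$, and the unique simple holomorphic disc is the union of vanishing cycles over the phase-constant affine segment from $p$ to $u$, exactly as in Section~\ref{801}. I would argue that for $u$ sufficiently close to $p$ (so that no other singularity of the affine structure lies on the segment $[p,u]$) the disc produced by Theorem~\ref{60} is Fredholm regular in the $S^1$-family --- this is where the estimates of \cite{GW} and the deformation theory of special Lagrangians with boundary \cite{B} enter, showing the linearized operator in the family has the expected kernel/cokernel --- so that $\mathcal{M}_{0,\gamma_e}(\mathfrak{X},L_u)$ is a single regular point (up to the $S^1$ of boundary marked points / domain automorphisms, which one quotients out), giving $\tilde{\Omega}(\gamma_e;u)=1$, and $\tilde{\Omega}(-\gamma_e;u)=1$ by the reality condition of Theorem~\ref{800}. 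For the multiple covers $d\gamma_e$ with $2\le |d|\le d_0$, rather than analyzing those moduli spaces directly, I would apply the multiple-cover formula already recorded for this setup in \cite{L4} (the same one displayed for the Ooguri-Vafa space), which expresses $[\mathcal{M}_{0,d\gamma_e}(\mathfrak{X},L_u)]^{vir}$ in terms of branched covers of the simple disc and yields $\tilde{\Omega}(d\gamma_e;u) = (-1)^{d-1}/d^2$; the deformation-invariance Proposition~\ref{352} then guarantees this value persists on the whole neighborhood $\mathcal{U}$ once we know it at one point of the ray and $\mathcal{U}$ avoids the walls.

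Finally, for the minimality claim I would observe that for any relative class $\gamma$ with $\mathcal{M}_{0,\gamma}(\mathfrak{X},L_u)\neq\emptyset$ one has $|Z_\gamma(u)| = $ symplectic area $>0$; as $u\to p$, the central charge $Z_{\gamma_e}(u)\to 0$ (it is the period of a Lefschetz thimble shrinking to the node), while any class not a multiple of $\gamma_e$ has $|Z_\gamma(u)|$ bounded below away from zero on a neighborhood of $p$ (again by Gromov compactness: only finitely many classes of bounded energy support discs, and those not proportional to $\gamma_e$ have $\partial\gamma\neq 0$ in the quotient by the vanishing cycle, forcing definite area). Hence shrinking $\mathcal{U}$ makes $\pm\gamma_e$ the unique energy-minimizers with nonvanishing invariant.

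I expect the main obstacle to be the Fredholm regularity of the multiply-covered and simple discs \emph{within the $S^1$-family} $\mathfrak{X}$ rather than for a fixed $X_\vartheta$: the virtual dimension bookkeeping differs between $\mathcal{M}_{0,\gamma}(\mathfrak{X},L)$ and $\mathcal{M}_{0,\gamma}(X_\vartheta,L)$ (the family gains one real dimension from $\vartheta$, which is exactly cancelled by the phase-matching constraint), so one must carefully identify the cokernel of the family-linearized $\bar\partial$-operator with the explicit obstruction coming from the variation of $\mbox{Im}(e^{-i\vartheta}\Omega)$ along the twistor circle and check it vanishes transversally at the discs at hand; the estimates of \cite{GW} in the collapsing limit are what make this controllable, but stitching them to the deformation theory of \cite{B} for Lagrangians-with-boundary and confirming the orientation (trivial spin structure on $T^2$) produces the precise sign $(-1)^{d-1}$ is the delicate part. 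The rest --- Gromov compactness to kill competing classes, avoiding walls by shrinking $\mathcal{U}$, and quoting the multiple-cover formula --- is comparatively routine given the earlier results.
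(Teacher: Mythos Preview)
Your approach differs from the paper's in a load-bearing way. The paper's argument (sketched immediately after the statement) is to \emph{solve a complex Monge--Amp\`ere equation} to build a one-parameter family of hyperK\"ahler structures interpolating between the Ooguri--Vafa metric and the Ricci-flat K3 metric in a neighbourhood of the $I_1$-fibre, and then to invoke the cobordism argument (as in Theorem~\ref{708} and Corollary~\ref{216}) to transport the invariants $\tilde\Omega(d\gamma_e)$ from the Ooguri--Vafa space---where they have already been computed in Section~\ref{801}---to the K3 setting. No direct regularity check on K3 is performed, nor any separate multiple-cover analysis there: everything is pulled back from the model through the interpolating family.

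Your plan instead tries to compute on K3 directly: produce the disc via Theorem~\ref{60}, verify Fredholm regularity in the $S^1$-family by hand, and then quote a multiple-cover formula. This has a structural gap. First, Theorem~\ref{60} only yields an immersed disc in the class $\gamma_e$ (not in $d\gamma_e$ for $d\ge 2$, as your opening paragraph claims), and only for a \emph{collapsed} metric $\omega'$ with $\int_{L_u}\omega'<\epsilon_0$; you never say how to return to the original Ricci-flat metric $\omega$, and Proposition~\ref{352} varies $u$, not the metric. Second, existence of the disc is not the same as Fredholm regularity in the family $\mathfrak X$; the \cite{GW} estimates and the deformation theory of \cite{B} feed into the existence statement of Theorem~\ref{60}, but you give no mechanism for transversality of the family-linearized operator on K3 itself. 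Third, invoking ``the multiple-cover formula already recorded for this setup in \cite{L4}'' is circular here unless you mean a universal formula for covers of any regular index-zero simple disc---and to apply that on K3 you would again need the regularity you have not established. The Monge--Amp\`ere interpolation is precisely the missing idea: it is what converts the Ooguri--Vafa computation into a K3 statement without redoing regularity and multiple covers in situ.
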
   
 The proof is essentially solving a complex Monge-Ampere equation to construct a family of hyperK\"ahler structures connecting the one on the Ooguri-Vafa space and the one on a neighborhood of $I_1$-type singular fibre in K3 surface $X$. Then we can use the cobordism argument similar in Theorem \ref{708}.

\begin{rmk}
The latter part of Theorem \ref{43} implies the affine ray corresponding to $\gamma_e$ from Proposition \ref{600} indeed corresponds to the initial ray in Gross-Seibert program \cite{GS1}. Moreover, the generation function 
    \begin{align*}
      X_{\gamma_e}=\sum_{d\in \mathbb{N}}d\tilde{\Omega}(\gamma_e)x^{d\gamma_e}=\log{(1+x^{\gamma_e})}
    \end{align*}exactly coincides with the wall-crossing factor (or the slab function) of the initial ray corresponding to $\gamma_e$.
\end{rmk}

The invariants from other relative classes are generated by wall-crossing formula inductively with respect to the energy which we will discuss a bit in the following section.

\subsection{Wall-Crossing Phenomenon}
Below we will demonstrate a non-trivial example of wall-crossing
phenomenon of invariants $\tilde{\Omega}(\gamma;u)$ on elliptic K3
surfaces $X$.

\begin{ex} \label{702}\cite{L4}
  Assume there are two initial rays emanating from two $I_1$-type singularities of phase $\vartheta_0$ intersect transversally
  at $p\in B_0$. From Theorem \ref{60}, there are two initial holomorphic
  discs of relative classes $\gamma_1,\gamma_2$ corresponding to the initial rays which are Fredholm. Then $p$ sits on a wall of marginal stability $W_{\gamma_1,\gamma_2}$. Moreover, the local model provided in the proof of
  Theorem \ref{60} indicates that they intersect transversally in $L_p$ when the K3 surface is close enough to the large complex limit. From automatic
  transversality of K3 surfaces, these two discs cannot be smoothed out inside
  $L_p$. To prove that these two discs will smooth when changing the Lagrangian boundary condition, first pick two point $p_1,p_2$ near $p$ but on the
  different sides of the wall of marginal stability
  $W_{\gamma_1,\gamma_2}$. Let $\psi:(-\epsilon, 1+\epsilon)$ be a path on $B_0$ such that $\psi(0)=p_1$, $\psi(1)=p_2$ and intersects $W_{\gamma_1,\gamma_2}$
  once transversally at $p$. Recall that $\mathcal{X}$ is the total space of twistor space of $X$ with two fibres with elliptic fibration deleted. Then $L_u\times S^1_\vartheta$ is a totally real torus in
  $\mathcal{X}$. Now consider an complex manifold $\mathfrak{X}\times \mathbb{C}$
  with a totally real submanifold
    \begin{align*}
       \mathcal{L}=\bigcup_t (L_{\psi(t)}\times S^1_{\vartheta}).
    \end{align*}
  By our assumption, there are two regular holomorphic discs in
  $\mathcal{X}$ with boundaries in
  $L_p\times\{\vartheta_0\}\subseteq \mathcal{L}$ of relative classes again we denoted by $\gamma_1$, $\gamma_2$. The tangent of
  evaluation maps for both discs are two dimensional and
  transversal. By Taubes gluing construction \cite{FOOO}, these two discs can be smoothed out
into simple regular discs in $\mathcal{L}$ and the union of initial
holomorhpic discs is indeed the codimension one of the boundary of
the usual moduli space of holomorphic discs
$\mathcal{M}_{0,\gamma_1+\gamma_2}(\mathfrak{X}\times \mathbb{C},\mathcal{L})$. By maximal principle twice, each of the
holomorphic disc falls in
$\mathcal{M}_{\gamma_1+\gamma_2}(X_{{\vartheta}_0},L_{\psi(t)})$ for some
$t$. In particular,
   \begin{align*}
     \mathcal{M}_{1,\gamma_1}(\mathfrak{X},L_p)\times_{L\times
     S^1_{\vartheta}}\mathcal{M}_{1,\gamma_2}(\mathfrak{X},L_p)\subseteq
     \mathcal{M}_{\gamma_1+\gamma_2}(\mathfrak{X},\{L_t\})
   \end{align*} as codimension one boundary. One can computes that the non-trivial wall-crossing phenomenon indeed occurs:
   \begin{align*}
      \Delta\tilde{\Omega}(\gamma_1+\gamma_2)
      &=\pm\langle\gamma_1,\gamma_2\rangle
      \tilde{\Omega}(\gamma_1,p)\tilde{\Omega}(\gamma_2,p)=\pm \langle \gamma_1,\gamma_2\rangle \neq 0.
   \end{align*}
\end{ex}

In general, we expect the following wall-crossing formula for $\tilde{\Omega}(\gamma)$, which is equivalent to Kontsevich-Soibelman wall-crossing formula \cite{KS2}.
 \begin{conj}
  \label{63} Assume $\gamma$ is a primitive charge. When $u$ cross a wall consisting of relative classes $\gamma_i$, $i=1,\cdots,n$, then one has the
               following wall-crossing formula for $\tilde{\Omega}$:
      \begin{align} \label{37}
         \Delta \tilde{\Omega}(d\gamma)=\sum_{\mathbf{w}:\sum|\mathbf{w_i}|\gamma_i=d\gamma}\frac{N^{trop}(\mathbf{w})}{|Aut(\mathbf{w})|} \bigg(\prod_{1 \leq i \leq n, 1 \leq j \leq l_i} \tilde{\Omega}(w_{ij}\gamma_i)
         \bigg),
      \end{align}where
      $\mathbf{w}=(\mathbf{w}_1,\cdots,\mathbf{w}_n)$,
      $\mathbf{w}_i=(w_{i1},\cdots,w_{il_i})\in \mathbb{Z}^{l_i}_{\geq
      0}$, and $|\mathbf{w}_i|=\sum^{l_i}_{k=1}w_{ik}$. The factor $N^{trop}(\boldsymbol{w})$ is a counting of tropical discs with integer value defined in \cite{GPS}\cite{L4}.
 \end{conj}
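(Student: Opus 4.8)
The plan is to establish the wall-crossing formula by relating the jump of $\tilde{\Omega}(d\gamma)$ across a wall of marginal stability to the combinatorics of the codimension-one boundary strata of a one-parameter family of moduli spaces, following the cobordism philosophy already used in Theorem \ref{708} and Example \ref{702}. Concretely, I would fix a path $\psi\colon (-\epsilon,1+\epsilon)\to B_0$ crossing the wall $W_{d\gamma}$ once transversally at a point $p$, and form the family moduli space $\mathcal{M}_{0,d\gamma}(\mathfrak{X}\times\mathbb{C},\mathcal{L})$ with totally real boundary $\mathcal{L}=\bigcup_t (L_{\psi(t)}\times S^1_\vartheta)$ as in Example \ref{702}. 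The difference $\tilde{\Omega}(d\gamma;\psi(1))-\tilde{\Omega}(d\gamma;\psi(0))$ equals the signed count of the codimension-one boundary of this family moduli space, which consists of configurations where the disc breaks along the wall into components carrying relative classes that are positive multiples of the $\gamma_i$'s subject to $\sum_{i,j} w_{ij}\gamma_i = d\gamma$.

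The key steps, in order, would be: (1) set up the family Kuranishi structure on $\mathcal{M}_{0,d\gamma}(\mathfrak{X}\times\mathbb{C},\mathcal{L})$ compatibly with the factorization $\mathcal{M}_{0,\gamma_i}$-pieces (invoking \cite{FOOO}\cite{F1}\cite{L4}), and verify via the maximal-principle argument of Example \ref{702} and the Ooguri-Vafa local model that each boundary configuration actually lies over a single parameter $t$ and a single complex structure $J_{\vartheta_0}$; (2) identify the combinatorial type of each boundary stratum with a rigid tropical disc in the base $B_0$, whose edges are the outgoing rays of classes $w_{ij}\gamma_i$ emanating from the incoming legs of classes $\gamma_i$ — this is where the factor $N^{trop}(\mathbf{w})$ enters as the count (with multiplicity) of such tropical discs passing through $\psi$, paralleling the tropical vertex / scattering picture of \cite{GPS}; (3) compute the contribution of a single stratum as a fibre product, obtaining $\frac{1}{|\mathrm{Aut}(\mathbf{w})|}N^{trop}(\mathbf{w})\prod \tilde{\Omega}(w_{ij}\gamma_i)$, where the automorphism factor accounts for relabeling of identical disc components and the virtual count over the fibre product factorizes because the evaluation maps are transversal; (4) fix the signs by the orientation/spin-structure bookkeeping of \cite{L4}, matching them with the quadratic refinement of \cite{GMN}; and (5) check that the resulting formula is the Lie-algebraic shadow of the Kontsevich-Soibelman wall-crossing formula \cite{KS2}, i.e. that the generating function identity $\prod_i \mathcal{K}_{\gamma_i}^{\Omega(\gamma_i)}$ reproduces \eqref{37} after passing from $\Omega$ to $\tilde{\Omega}$ via the multiple-cover relation of Conjecture \ref{222}.

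The main obstacle I expect is step (2) together with step (3): precisely matching the virtual boundary contributions of the family moduli space with the tropical count $N^{trop}(\mathbf{w})$ and the automorphism factor. On the symplectic side one must show that every codimension-one degeneration is a \emph{nodal} configuration of simple discs glued along $L_p$ with prescribed multiple-cover structure — no hidden sphere bubbles, no disc components with boundary on the wrong fibre — which relies heavily on the positivity of intersection and automatic transversality special to K3 surfaces, and on the maximal principle to pin the broken configuration to $\vartheta_0$. On the tropical side one must prove that the moduli of such configurations, cut down by the incidence condition coming from $\psi$, is a transversally cut-out zero-dimensional space counted by $N^{trop}(\mathbf{w})$ with exactly the weight $\frac{N^{trop}(\mathbf{w})}{|\mathrm{Aut}(\mathbf{w})|}$. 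Establishing this correspondence rigorously — effectively an open-string analogue of the Gross-Pandharipande-Siebert tropical vertex — is the heart of the matter; the reduction to Kontsevich-Soibelman in step (5) is then a formal manipulation once the enumerative identity is in hand. For this reason the statement is phrased as a conjecture, with the transversal two-wall case of Example \ref{702} serving as the verified base case.
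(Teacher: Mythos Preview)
The paper does not prove this statement: it is explicitly a \emph{conjecture}, and the paper itself says ``The study of Conjecture \ref{63} will leave for future work.'' There is therefore no proof in the paper to compare your proposal against; the only evidence the paper offers is the remark that Example \ref{702} is consistent with the conjectured formula in the simplest case $\gamma=\gamma_1+\gamma_2$ with $\langle\gamma_1,\gamma_2\rangle\neq 0$.

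Your outline is a reasonable and natural extrapolation of the cobordism argument of Example \ref{702} to the general case, and you correctly identify the main difficulty: controlling \emph{all} codimension-one boundary strata of the family moduli space and matching their virtual contributions with the tropical count $N^{trop}(\mathbf{w})/|\mathrm{Aut}(\mathbf{w})|$. You are also right that this is where the argument is genuinely incomplete rather than merely technical --- in particular, ruling out sphere bubbles, handling non-transverse or iterated degenerations, and proving the factorization of the virtual class over the fibre product in the presence of multiple covers are all open issues not addressed in the paper. Your closing sentence already acknowledges this, so your proposal is best read not as a proof but as a program, which is exactly how the paper treats the statement.
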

\begin{rmk}
Notice that the wall-crossing phenomenon in Example \ref{702} satisfies the Conjecture \ref{63}.
\end{rmk}

\section{Tropical Geometry of K3 Surfaces and the Corresponding Theorem}
In this section, we want to study the tropical geometry of K3 surfaces and a corresponding theorem between holomorphic discs and tropical discs.

Tropical geometry raises naturally from the point of view of modified Strominger-Yau-Zaslow conjecture \cite{GW}\cite{KS4}\cite{SYZ}. Naively, the special Lagrangian fibration in a Calabi-Yau manifold collapses to the base affine manifold when the Calabi-Yau manifold goes to the large complex limit point. The projection of holomorphic curves are amoebas and degenerate to some $1$-skeletons at the limit, which are called "tropical curves" in modern terminology. This idea has been carried out for toric varieties by Mikhalkin \cite{M2} and Lagrangian bundles with no monodromy by Parker \cite{P}. Here we are going to discuss the case where special Lagrangian fibration comes from K3 surfaces, which admits singular fibres and consider holomorphic discs instead of holomorphic curves. 

Let $B$ be the base of a holomorphic Lagrangian fibration of a hyperK\"ahler manifold $X$ then we have an $S^1$-family of integral affine structures on $B_0$. Indeed, for any $\vartheta \in S^1$ and lifting $\gamma_i$ of the generators of $\Gamma_g$, the functions $f_i=Re(e^{-i\vartheta}Z_{\gamma_i})$ give the local  affine coordinates with transition functions in $Sp(2,\mathbb{Z})\ltimes \mathbb{R}^2$. We will denote $B$ together with this integral affine structure by $B_{\vartheta}$. In particular, neither the choice of K\"ahler class $[\omega]$ of the hyperK\"ahler manifold nor the real scaling of the holomorphic $(2,0)$-form $\Omega$ change the affine straight lines on the base affine manifold. One advantage of introducing the affine structure is allowing us to discuss tropical geometry on $B$ \cite{L4}.

\begin{prop}\label{600}
Locally the set of special Lagrangian torus fibres bounding holomorphic
discs of a same relative class in $X_{\vartheta}$ all fall above an affine hyperplane on the base affine manifold $B_{\vartheta}$.
\end{prop}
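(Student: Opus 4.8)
The plan is to extract the statement directly from the behavior of the central charge $Z_\gamma$ along the base, using the holomorphicity/pluriharmonicity already established. First I would fix a relative class $\gamma$ and observe, as in equation (\ref{736}), that whenever a special Lagrangian torus fibre $L_u$ in $X_\vartheta$ bounds a holomorphic disc in class $\gamma$, the phase of the central charge is pinned: $\mbox{Arg}\,Z_\gamma(u) = \vartheta + \pi/2$. Hence the locus of such $u$ (for the \emph{fixed} $\vartheta = \mbox{Arg}\,Z_\gamma - \pi/2$ determined by $\gamma$, which is what "same relative class in $X_\vartheta$" means) is contained in the level set
\begin{align*}
\{ u \in B_0 \mid \mbox{Arg}\,Z_\gamma(u) = \vartheta + \pi/2 \} = \{ u \in B_0 \mid \mbox{Im}(e^{-i\vartheta} Z_\gamma(u)) = 0 \}.
\end{align*}
So the whole content is to show that this level set is an affine hyperplane in the integral affine structure $B_\vartheta$.

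The key step is to identify $\mbox{Re}(e^{-i\vartheta} Z_\gamma)$ and $\mbox{Im}(e^{-i\vartheta} Z_\gamma)$ with the affine coordinate functions on $B_\vartheta$ described just before the proposition: for a lift $\gamma$ of a generator of $\Gamma_g$, the function $f = \mbox{Re}(e^{-i\vartheta} Z_\gamma)$ is by definition one of the affine coordinates, with transition functions in $Sp(2,\mathbb{Z}) \ltimes \mathbb{R}^2$. I would argue that $\mbox{Im}(e^{-i\vartheta} Z_\gamma)$ is likewise an affine function: either because it equals $\mbox{Re}(e^{-i(\vartheta + \pi/2)} Z_\gamma)$, i.e. the same construction run at the rotated angle $\vartheta + \pi/2$ on the \emph{same} base, or more intrinsically because Lemma \ref{34} gives $d\,\mbox{Im}(e^{-i\vartheta}Z_\gamma)(v) = \int_{\partial\gamma} \iota_{\tilde v}\,\mbox{Im}(e^{-i\vartheta}\Omega) = -\int_{\partial\gamma}\iota_{\tilde v}\,\omega_\vartheta$, and $\omega_\vartheta$ is precisely the symplectic form whose associated action coordinates define $B_\vartheta$; thus the differential of $\mbox{Im}(e^{-i\vartheta}Z_\gamma)$ is constant in affine coordinates, i.e. the function is affine-linear. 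Then its zero set is an affine hyperplane, and the set of torus fibres bounding holomorphic discs of class $\gamma$ lies above it (with the word "above" accounting for the fact that the condition is necessary but perhaps not sufficient — it is the inclusion that is claimed).

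The main obstacle I anticipate is bookkeeping the monodromy and the "same relative class" convention carefully: $\gamma$ is a section of the local system $\Gamma$ only locally, so the level set and the affine hyperplane are only locally defined, which is why the proposition says "locally"; near a singularity of the affine structure one must check (as in the corollaries above) that $Z_\gamma$ extends and the relevant function is still affine, e.g. when $\gamma$ is a Lefschetz thimble. A secondary subtlety is the degenerate case where $\partial\gamma = 0$: then $Z_\gamma$ is locally constant, $d\,\mbox{Im}(e^{-i\vartheta}Z_\gamma) \equiv 0$, and the "hyperplane" is either empty or all of the local chart, which should be flagged as a trivial boundary case. Modulo these points the argument is essentially the observation that the phase constraint (\ref{736}) cuts out a level set of an affine coordinate function, which is tautologically an affine hyperplane in $B_\vartheta$.
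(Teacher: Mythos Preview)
Your strategy---extract the phase constraint from (\ref{736}) and recognize it as the vanishing of an affine coordinate on $B_\vartheta$---is exactly the paper's approach, but you have a Re/Im slip that propagates. If $\mbox{Arg}\,Z_\gamma(u)=\vartheta+\pi/2$ then $e^{-i\vartheta}Z_\gamma(u)$ has argument $\pi/2$, i.e.\ it is purely imaginary, so the constraint is
\[
\mbox{Re}\bigl(e^{-i\vartheta}Z_\gamma(u)\bigr)=0,
\]
not $\mbox{Im}(\cdots)=0$. The paper obtains this same equation more directly: since $\Omega_\vartheta$ is a $(2,0)$-form for $J_\vartheta$, it pulls back to zero along any $J_\vartheta$-holomorphic disc, so $\int_\gamma\Omega_\vartheta=\int_\gamma\omega-i\int_\gamma\mbox{Re}(e^{-i\vartheta}\Omega)=0$, whence $f_\gamma:=\mbox{Re}(e^{-i\vartheta}Z_\gamma)=0$. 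Because the affine coordinates on $B_\vartheta$ are \emph{by definition} the functions $\mbox{Re}(e^{-i\vartheta}Z_{\gamma_i})$ for lifts $\gamma_i$ of generators of $\Gamma_g$, the set $\{f_\gamma=0\}$ is tautologically an affine hyperplane; no further argument is needed.

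Your detour through $\mbox{Im}(e^{-i\vartheta}Z_\gamma)$ is therefore unnecessary, and it also contains a second error: you assert that ``$\omega_\vartheta$ is precisely the symplectic form whose associated action coordinates define $B_\vartheta$,'' but the paper's $B_\vartheta$ is built from periods of $\mbox{Re}(e^{-i\vartheta}\Omega)=-\mbox{Im}\,\Omega_\vartheta$, not of $\omega_\vartheta$. Periods of $\omega_\vartheta$ give a \emph{different} affine structure (the symplectic one, in SYZ language), so your Lemma~\ref{34} computation $d\,\mbox{Im}(e^{-i\vartheta}Z_\gamma)(v)=-\int_{\partial\gamma}\iota_{\tilde v}\omega_\vartheta$ does not show affinity with respect to $B_\vartheta$. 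Once you correct the Re/Im swap, this whole paragraph can be deleted and your proof collapses to the paper's two-line argument.
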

\begin{proof}
   Assume $\{L_t\}_{t\in (-\epsilon,\epsilon)}$ are a family of special Lagrangian torus fibres bound holomorphic discs in relative class $\gamma_t\in H_2(\underline{X},L_t)$ in $X_{\vartheta}$. Then 
     \begin{align*}
      \int_{\gamma_t}\Omega_{\vartheta}=\int_{\gamma_t}\omega-i\mbox{Re}(e^{-i\vartheta}\Omega)=0. 
     \end{align*}
  In particular, $L_t$ are confined by the equation
     \begin{align*}
       f_{\gamma}:=\mbox{Re}(e^{-i\vartheta}Z_{\gamma})=0,
     \end{align*}which all sit above an affine hyperplane. 
\end{proof}
\begin{rmk}
  From the proof of Proposition \ref{600}, the prescribed affine line is special in the sense that the corresponding central charge $Z_{\gamma}$ has constant phase along $\{f_{\gamma}=0\}$. We will call them special affine lines with respect to phase $\vartheta$.
\end{rmk}
\begin{rmk}
  In dimension two, the affine structure described in Proposition \ref{600} is usually called the complex affine structure of the special Lagrangian in the context of mirror symmetry. 
\end{rmk}   
  
  One notice that using the integral affine structure, the following map
   \begin{align*}
               (&\Gamma_g)_u \longrightarrow \hspace{7mm} T^*_uB \\
                  &\partial \gamma \mapsto   \big(v \in T_uB \mapsto \int_{\partial\gamma}\iota_{\tilde{v}}\mbox{Im}(e^{-i\vartheta}\Omega) \big)
   \end{align*}is injective. Here the notation is same in Proposition \ref{34}. In particular, the above map induces an integral structure on $T_u^*B_{\vartheta}$ and thus on $T_uB_{\vartheta}$. The following is a temporary definition of tropical discs proposed in \cite{G3}.
     
\begin{definition} \label{732}
  A tropical disc on $B_{\vartheta}$ is a $3$-tuple $(\phi,G,w)$ where $G$ is a rooted connected tree with a root $x$. We denote the set of vertices and edges by $G^{[0]}$ and $G^{[1]}$ respectively, with a weight function $w:G^{[1]}\rightarrow \mathbb{N}$. And $\phi:G\rightarrow B$ is a continuous map such that 
  \begin{enumerate}
    \item For each $e\in G^{[1]}$, $\phi|_e$ is an affine segment on $B_{\vartheta}$.  
    \item For the root $x$, $\phi(x)\in B_0$.
    \item For each $v\in G^{[0]}$, $v\neq x$ and $\mbox{val}(v)=1$, we have $\phi(v)\in \Delta$. Moreover, if $\phi(v)$ corresponds to an $I_n$-type singular fibre, then the image of edge adjacent to $v$ is in the monodromy invariant direction. 
    \item For each $v\in G^{[0]}$, $\mbox{val}(v)\geq 1$, we have the following assumption: 
    
      (balancing condition) Each outgoing tangent at $u$ along the image of each edge adjacent to $v$ is rational with respect to the above integral structure on $T_{\phi(v)}B$. Denote the outgoing primitive tangent vectors by $v_i$, then 
          \begin{align*}
           \sum_i w_i v_i=0.
          \end{align*}
  \end{enumerate}
\end{definition}
From now on, we will assume $X$ is an elliptic K3 surface.
Assume $\gamma_u\in H_2(X,L_u)$ can be represented as a holomorphic disc
with boundary on $L_u$ such that
$\tilde{\Omega}(\gamma;u)\neq 0$. Without lose of
generality, we may assume $\int_{\gamma_u}\Omega \in
\mathbb{R}_+$. There is an affine half line $l$ emanating from $y$
on the base such that $\int_{\gamma_t}\Omega$ is a
decreasing function of $t\in l$, where $\gamma_t$ is the
parallel transport of $\gamma_u$ along $l$. Since
$\int_{ \gamma_{t}}\Omega$ is an affine function along $l$,
it has no lower bound. There is some point $u'\in l$ such that
$\int_{\gamma_{u'}}\Omega=0$. Thus, there are two cases:
  \begin{enumerate}
    \item If $\tilde{\Omega}(\gamma;u)=\tilde{\Omega}(\gamma;u')$ then $L_{u'}$ is a singular fibre by the gradient estimate of harmonic maps.  In particular,
    if $L_{u'}$ is an $I_n$-type singular fibre then $\gamma_{u'}$ is represented by multiple cover of the unique area
    minimizing holomorphic disc and $\gamma_u$ is the parallel of $\gamma_{u'}$ along
    $l$.
    \item If $\tilde{\Omega}(\gamma;u)\neq \tilde{\Omega}(\gamma;u')$, then there exists
    $\gamma_{n,u'}$ in the same phase with $\gamma_{u'}$ such that
    $\sum_n\gamma_{n,u'}=\gamma_{u'}$ and $\tilde{\Omega}(\gamma_n;u')\neq 0$.  Then we replace $\gamma$ by each $\gamma_{i,u'}$ and repeat the same
    processes. The procedure will stop at finite time because of
    Gromov compactness theorem. By induction, every holomorphic
    disc with nontrivial invariant give rise to a tropical disc,
    which is formed by the union of the affine segments on the base.
    Indeed, the assumption 3.(a) in Definition \ref{732} is implied the fact that each vertex of above graph falls on $W_{\gamma'}$ for some $\gamma'$. The balancing conditions are guaranteed by the
    conservation of charges $\sum_n\gamma_{n,u'}=\gamma_{u'}$ at
    each vertex $u'$. Using the normal construction, one can associate the tropical disc $\phi$ a relative class $[\phi]$. It is not hard to show that $[\phi]=\gamma$.
  \end{enumerate} To sum up, we proved the following theorem by this
  attractor flow mechanism \cite{DM} of holomorphic discs (See also Figure 2).

\begin{thm} \label{47} \cite{L4}
  Let $X$ be an elliptic K3 surface (with singular fibres not necessarily of $I_1$-type). For every relative class $\gamma\in H_2(X,L_u)$, $[\partial \gamma] \neq 0\in H_1(L_u)$ and $\tilde{\Omega}(\gamma;u)\neq 0$, there is a corresponding
  tropical disc $\phi$. Assume the singular fibres are all of $I_1$-type then $[\phi]=\gamma$ and the symplectic area of the holomorphic disc is just the total affine length of the corresponding tropical disc.
\end{thm}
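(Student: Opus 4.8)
The plan is to build the tropical disc by the attractor-flow construction and to induct on the energy $|Z_{\gamma}|$, with Gromov compactness guaranteeing that the recursion terminates. First I would normalize: rotating $\Omega$ by a phase does not change the affine structure on $B$ (as noted before Proposition~\ref{600}), so I may assume $Z_{\gamma}(u)=\int_{\gamma}\Omega\in\mathbb{R}_{>0}$ and work with the single complex structure $J_{\vartheta}$ on the equator, $\vartheta=\mathrm{Arg}Z_{\gamma}-\pi/2$, that can support $\gamma$. Since $B$ is a surface, Proposition~\ref{600} and the remark after it show that $\{\,\mathrm{Re}(e^{-i\vartheta}Z_{\gamma})=0\,\}$ is a single special affine line $l$ through $u$, along which the phase of $Z_{\gamma_{t}}$ is constant and $|Z_{\gamma_{t}}|=|\mathrm{Im}(e^{-i\vartheta}Z_{\gamma_{t}})|$ is affine; by Corollary~\ref{350} this affine function has nonvanishing differential, so following $l$ in the direction of decreasing modulus reaches a point where $Z_{\gamma}$ vanishes.

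Next I would run the dichotomy. By Proposition~\ref{352} and Theorem~\ref{708}, $\tilde{\Omega}(\gamma;\cdot)$ stays equal to $\tilde{\Omega}(\gamma;u)$ along $l$ until $l$ meets the wall of marginal stability. If $l$ reaches the zero locus $\{Z_{\gamma}=0\}$ at a point $u'$ without crossing a wall, then $\tilde{\Omega}(\gamma;u')=\tilde{\Omega}(\gamma;u)\neq0$, so there are $J_{\vartheta}$-holomorphic discs in class $\gamma_{u'}$ on $L_{u'}$, but of zero symplectic area by~(\ref{736}); a gradient estimate for these harmonic maps forces them to collapse to a point, which is incompatible with $[\partial\gamma_{u'}]\neq0$ unless $L_{u'}$ degenerates, so $u'\in\Delta$. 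For an $I_{n}$ fibre the Gromov limit is then a (multiple cover of the) Lefschetz thimble $\gamma_{e}$, the line $l$ necessarily meets $u'$ along the monodromy-invariant ray, and this branch terminates in a leaf---this is condition~(3) of Definition~\ref{732}. Otherwise $l$ first meets a wall at a point $u'$ with $Z_{\gamma_{u'}}\neq0$, where the bubbling decomposition yields $\gamma_{u'}=\sum_{n}\gamma_{n,u'}$ with $n\geq2$, each $\gamma_{n,u'}$ supported at phase $\vartheta$ and $\tilde{\Omega}(\gamma_{n};u')\neq0$; since the phases agree the symplectic areas add, so $|Z_{\gamma_{n,u'}}|<|Z_{\gamma_{u'}}|<|Z_{\gamma}(u)|$, and I would recurse on each $\gamma_{n,u'}$ emanating from $u'$. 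Only finitely many such decompositions occur (Gromov compactness, as in Theorem~\ref{811}), and on a fixed K3 surface only finitely many classes $\gamma'$ with $\tilde{\Omega}(\gamma')\neq0$ satisfy $|Z_{\gamma'}|\leq|Z_{\gamma}(u)|$; since the energy strictly drops along every branch, the recursion halts and produces a finite rooted tree $G$ together with a continuous map $\phi\colon G\to B$ sending the root to $u$, each edge to a segment of a special affine line, and each other leaf into $\Delta$.

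Finally I would verify that $(\phi,G,w)$ is a tropical disc and identify its class and length. Conditions~(1)--(3) of Definition~\ref{732} hold by construction. For the balancing condition~(4) at an interior vertex $u'$: under the integral identification of $(\Gamma_{g})_{u'}$ with a sublattice of $T^{*}_{u'}B_{\vartheta}$ from the discussion preceding Definition~\ref{732}, Lemma~\ref{34} identifies the outgoing primitive tangent of the edge carrying a class $\gamma'$ with the primitive vector proportional to $\pm\partial\gamma'$, so the conservation of charge $\partial\gamma_{u'}=\sum_{n}\partial\gamma_{n,u'}$ becomes exactly $\sum_{i}w_{i}v_{i}=0$, and integrality is automatic since $\partial\gamma_{n,u'}\in H_{1}(L_{u'};\mathbb{Z})$. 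Assembling the class $[\phi]$ by parallel transporting the leaf classes back up the tree, the decomposition chosen at each interior vertex gives $[\phi]=\gamma$ once all fibres are of $I_{1}$-type, since then the leaf classes are the primitive thimbles $\pm\gamma_{e}$ and there is no multiple-cover ambiguity. For the length, write $A(\gamma')=|Z_{\gamma'}|$ for the symplectic area; constancy of the phase of $Z$ along each edge makes the affine length of $\phi|_{e}$ equal the value of $A$ at the endpoint of $e$ nearer the root minus its value at the other endpoint, so summing over all edges the relation $A(\gamma_{u'})=\sum_{n}A(\gamma_{n,u'})$ telescopes the sum and, since $A$ vanishes at every leaf, leaves $\sum_{e}\ell(\phi|_{e})=A(\gamma)$, the symplectic area of the original disc. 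The main obstacle will be the analysis at $u'$ in the first case---pinning down the Gromov limit of the collapsing discs on the singular fibre and reading off the monodromy-invariant direction---which is where the local model of the fibration near singular fibres and the harmonic-map gradient estimate genuinely enter; the termination argument and the balancing bookkeeping are comparatively routine.
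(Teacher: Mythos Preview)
Your proposal is correct and follows essentially the same attractor-flow argument as the paper: normalize the phase, follow the special affine ray in the direction of decreasing $|Z_{\gamma}|$, split into the two cases (reach a singular fibre versus hit a wall and decompose), and recurse with Gromov compactness ensuring termination, then read off the balancing condition from conservation of charges. You in fact supply more detail than the paper does---in particular the telescoping computation of the affine length and the use of Lemma~\ref{34} to translate $\sum_{n}\partial\gamma_{n,u'}=\partial\gamma_{u'}$ into the balancing condition---but the strategy is identical.
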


\begin{figure}
\begin{center}
\includegraphics[height=3in,width=6in]{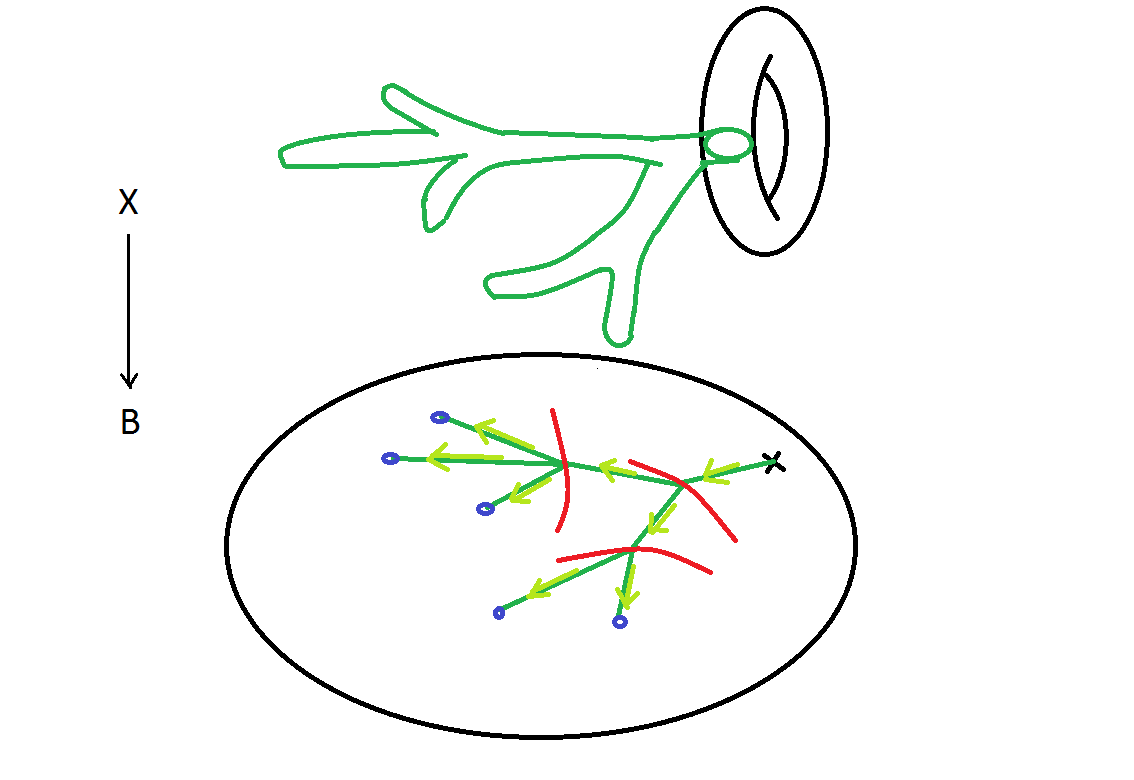}
\caption{Holomorphic discs with non-trivial invariant and its corresponding tropical disc. The red curves are walls of marginal stability and blue little circles are singularity of the affine structure}
\end{center}
\end{figure}

Theorem \ref{47} also helps to understand the topology of holomorphic discs in elliptic K3 surfaces.
\begin{cor} All the holomorphic discs with non-trivial boundary class and with non-trivial open Gromov-Witten invariant in $X$ topologically are coming from scattering" (gluing) of discs coming from singularities.
\end{cor}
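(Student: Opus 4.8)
The plan is to read the statement off Theorem \ref{47} together with the inductive construction used to prove it. First I would start with a holomorphic disc $u$ with boundary on a torus fibre $L_u$ in a relative class $\gamma$ satisfying $[\partial\gamma]\neq 0\in H_1(L_u)$ and $\tilde{\Omega}(\gamma;u)\neq 0$. Theorem \ref{47} produces a corresponding tropical disc $\phi=(\phi,G,w)$ on $B_\vartheta$, with $\vartheta=\mbox{Arg}Z_\gamma-\pi/2$, and moreover $[\phi]=\gamma$. So the first step is simply to invoke Theorem \ref{47} and then unwind the combinatorics of the rooted tree $G$.

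Next I would look at the univalent vertices of $G$ other than the root. By condition 3 of Definition \ref{732} each of them maps into the discriminant locus $\Delta$, and over an $I_n$-type singular fibre the adjacent edge is forced to point in the monodromy invariant direction. By the discussion preceding Theorem \ref{60}, and by Theorem \ref{43} in the $I_1$ case, the relative class carried by such a leaf edge is a positive multiple of the Lefschetz thimble class $\gamma_e$ at that singularity, and every such class is realized by an honest (immersed, possibly multiply covered) holomorphic disc supported near the singular fibre. These are exactly the ``discs coming from singularities'' in the statement.

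Then I would treat the vertices of valence $\geq 2$. From the proof of Theorem \ref{47}, case (2), each such vertex lies on a wall of marginal stability $W_{\gamma'}$ for its incoming class $\gamma'$, and the balancing condition $\sum_i w_iv_i=0$ of Definition \ref{732}(4) there is precisely the conservation of charge $\gamma'=\sum_n\gamma_{n}$. By the gluing analysis of Example \ref{702} (Taubes gluing of transversally intersecting Fredholm regular discs along the family totally real submanifold $\mathcal{L}$), near such a wall the outgoing disc in class $\gamma'$ is obtained by gluing the incoming discs; this is the scattering operation. Running this induction over $G$ from the leaves toward the root, starting from the discs attached to leaf edges and gluing at every internal vertex according to the weights $w$, one rebuilds a holomorphic disc whose root class is $[\phi]=\gamma$. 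This says that $u$ is, topologically, a scattering of discs coming from singularities.

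The main obstacle is the precise content of ``topologically''. If one only wants the identification of relative classes, the corollary is immediate from $[\phi]=\gamma$ together with the fact that the leaves contribute thimble classes and the internal vertices only recombine them via the balancing law. If one instead wants the image or the homotopy type of $u$ itself to be visibly a connected sum of the local vanishing-cycle discs, then one must patch the local models of Theorem \ref{60} along the edges of $\phi$ and control the neck regions by Gromov compactness; I would carry this out exactly as in Example \ref{702}, iterated along the tree, using that all discs involved are Fredholm regular in the $S^1$-family so that the gluing parameters organize themselves into the edges of the tropical disc.
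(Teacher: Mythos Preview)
Your proposal is correct and follows the same line as the paper. In the paper this corollary is stated without a separate proof, as an immediate consequence of Theorem~\ref{47} and the attractor flow mechanism described just before it; your unpacking of the tree $G$ into leaf edges (thimble classes at singularities) and internal vertices (walls with charge conservation) is exactly that mechanism, and the additional bottom-up gluing via Example~\ref{702} is more than the paper supplies but not needed for the purely topological assertion.
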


Conversely, we can ask if each tropical discs on K3 surface has a lifting holomorphic disc. This involves more detail of analysis of geometry near different kinds of singular fibres. The Conjecture \ref{63} implies this converse statement for the case that all singular fibres are of $I_1$-type. The study of Conjecture \ref{63} will leave for future work. 

\section{Twistorial Construction of HyperK\"ahler Metric}
  The well-known Calabi conjecture solved by Yau guarantees that given
   a compact Calabi-Yau manifold, there exists a unique Ricci-flat
   K\"ahler metric in each prescribed K\"ahler class \cite{Y1} in 1978. After the  existence, an important question to ask is how to write down the
   explicit expression of the metric. The celebrated Strominger-Yau-Zaslow conjecture \cite{SYZ} suggested that
   Calabi-Yau manifolds will admit a special Lagrangian fibration around
   large complex limits and the mirror will be given by the dual
   fibration. It is a folklore that the Ricci-flat metrics near large
   complex limits are approximated by semi-flat metrics with instanton
   correction related to the holomorphic discs with boundaries on
   special Lagrangian fibres \cite{F3}. The first part is done for K3
   surfaces:  in \cite{GVY} the semi-flat metric is wrote down for the special Lagrangian fibration. Later, Gross and Wilson \cite{GW} proved that for elliptic K3 surfaces around large complex limits, the Ricci-flat metrics are approximated by the semi-flat metrics gluing with Ooguri-Vafa metrics. However, the instanton corrections are not included.
  
  Although the semi-flat metric approximates the true Ricci-flat metric near the large complex limit point, the curvature of the semi-flat metric blows up near the singular fibres and thus cannot be extended to the whole K3 (or a general) hyperK\"ahler manifold $X$. To remedy this defect, one has to introduce the "quantum corrections". From the result of \cite{HKLR}, the explicit expression of hyperK\"ahler metric of a hyperK\"ahler manifold can be achieved from holomorphic symplectic $2$-forms with respect to all complex structures parametrized by the twistor line. The idea is to glue pieces of flat space with standard holomorphic symplectic $2$-form via certain symplectormorphisms, which are determined by the generalized Donaldson-Thomas invariants. These invariants are locally some integer-valued functions depending on the charge (thus depending on the base). There are so-called walls of marginal stability separate the base of abelian fibration into chambers locally. The invariants are constants inside the chamber while might jump when across the wall. The jump of the invariants cannot be arbitrary but governed by the Kontsevich-Soibelman wall-crossing formula \cite{KS2}, which suggests the compatibility of the gluing flat pieces with a global holomorphic symplectic $2$-form. The Kontsevich-Soibelman wall-crossing formula is then interpreted as the smoothness of the holomorphic symplectic $2$-form. Therefore, one can construct the holomorphic symplectic $2$-forms $\Omega_{\zeta}$ for $\zeta \in \mathbb{C}^*$ inside the twistor line $\mathbb{P}^1$. Using the reality condition (see the Theorem \ref{800}), \cite{GMN} argues that the Cauchy-Riemann equation can indeed extend over whole twistor line $\mathbb{P}^1$.

In \cite{GMN2}, Gaiotto-Moore-Neitzke found out the correct notion of generalized Donaldson-Thomas invariants and carried out the above recipe for Hitchin moduli spaces. It still remains open for the construction of hyperK\"ahler metric for general abelian fibred hyperK\"ahler manifolds, especially compact ones such as K3 surfaces. Follow the the recipe in \cite{GMN}, the key step is to find the corresponding generalized Donaldson-Thomas invariants satisfying various properties. Here, we introduce the reduced open Gromov-Witten invariants which are conjectured to satisfy the Kontsevich-Soibelman wall-crossing formula and serve this purpose.  

\begin{rmk}
  However, there are still essential difficulties to carry out the
  twistorial construction of hyperK\"ahler metric on compact hyperK\"ahler manifolds. First of all, the
  metric is only constructed outside of singular torus fibres and it
  is hard to check if the metric can be extended. Secondly, for
  compact hyperK\"ahler manifolds, the generalized Donaldson-Thomas invariants might has growth rate faster than
  exponential growth and make the formal solution of
  relevant Riemann-Hilbert problem diverge.
\end{rmk}

\appendix
\flushbottom
\include{AppendixA}
\flushbottom

\begin{bibdiv}
\begin{biblist}
\bibselect{file001}
\end{biblist}
\end{bibdiv}

Department of Mathematics, Stanford University\\
 E-mail address: yslin221@stanford.edu 

\end{document}